\numberwithin{equation}{section}
\newtheorem{theorem}{Theorem}[section]
\newtheorem{corollary}[theorem]{Corollary}
\newtheorem{proposition}[theorem]{Proposition}
\newtheorem{lemma}[theorem]{Lemma}
\newtheorem{remark}[theorem]{Remark}
\newcommand{\pt}{\partial_t}
\newcommand{\pnu}{\partial_\nu}
\renewcommand{\Re}{\operatorname{Re}}
\renewcommand{\Im}{\operatorname{Im}}
\title{Identification of source terms in the Schr\"odinger equation with dynamic boundary conditions from final data}
\date{}
\author{
	Salah-Eddine Chorfi\thanks{Faculty of Sciences Semlalia, Cadi Ayyad University, LMDP, IRD, UMMISCO, B.P. 2390, Marrakesh, Morocco, e-mail: s.chorfi@uca.ac.ma}
        \and
        Alemdar Hasanov\thanks{Department of Mathematics,
    Kocaeli University, 41001, Kocaeli, Turkey, e-mail: alemdar.hasanoglu@gmail.com}
	\and
	Roberto Morales\thanks{IMUS, Universidad de Sevilla, Campus Reina Mercedes, 41012 Sevilla, Spain, e-mail: rmorales1@us.es}
	}
\begin{document}
\maketitle

\begin{abstract}
    In this paper, we study an inverse problem of identifying two spatial-temporal source terms in the Schr\"odinger equation with dynamic boundary conditions from the final time overdetermination. We adopt a weak solution approach to solve the inverse source problem. By analyzing the associated Tikhonov functional, we prove a gradient formula of the functional in terms of the solution to a suitable adjoint system, allowing us to obtain the Lipschitz continuity of the gradient. Next, the existence and uniqueness of a quasi-solution are also investigated. Finally, our theoretical results are validated by numerical experiments in one dimension using the Landweber iteration method.
\end{abstract}

\noindent{\bf Keywords:} Schr\"odinger equation, inverse problem, dynamic boundary condition, Fr\'echet gradient, Landweber iteration.
\smallskip

\noindent{\bf MSC (2020):} 35J10; 35R25; 35R30; 49N45; 47A05.

\tableofcontents

\section{Introduction}
The Schr\"odinger equation is a fundamental model in quantum mechanics which governs how quantum systems evolve over time. The equation describes the dynamical behavior of particles at the quantum level, such as electrons or atoms, by determining their wave functions; see e.g. \cite{BS12}. Linear and nonlinear Schrödinger equations have also been intensely studied due to their applications to plasma physics and laser optics; see e.g. \cite{Ba93} and \cite{GM92}. In the recent literature, an increasing interest has been devoted to inverse problems associated with the Schrödinger equation. However, the literature on source identification problems from the final time-measured data is scarce.

Let $\Omega\subset \mathbb{R}^{N} $ be a bounded domain $(N\geq 1)$ with boundary $\Gamma:=\partial \Omega$ of class $C^2$. For $T>0$, we adopt the notation $Q:=\Omega\times (0,T)$ and $\Sigma:=\Gamma\times (0,T)$. In the sequel, function spaces refer to spaces of complex-valued functions. Consider the following initial-boundary value problem for the Schr\"odinger equation with dynamic boundary conditions
\begin{align}
    \label{intro:eq:01}
    \begin{cases}
        i\pt y +d\Delta y-\vec{p}\cdot \nabla y+a(x)y=f(x,t)&\text{ in }Q,\\
        i\pt y_\Gamma -d\pnu y +\gamma \Delta_\Gamma y_\Gamma -\vec{p}_\Gamma \cdot \nabla_\Gamma y_\Gamma +a_\Gamma(x)y_\Gamma =f_\Gamma(x,t)&\text{ on }\Sigma,\\
        y\big|_{\Gamma}=y_\Gamma&\text{ on }\Sigma,\\
        (y(\cdot,0),y_\Gamma (\cdot,0))=(0,0)&\text{ in }\Omega\times\Gamma.
    \end{cases}
\end{align}
Here, $i=\sqrt{-1}$ is the imaginary unit, $d,\gamma>0$ are constant, $(a,a_\Gamma)\in L^\infty(\Omega)\times L^\infty(\Gamma)$ are potentials, $(\vec{p},\vec{p}_\Gamma)$ are drift vector fields, and $(f,f_\Gamma)\in L^2(Q)\times L^2(\Sigma)$ is the unknown source term. In addition, $y\big|_{\Gamma}$ denotes the trace of $y$ at the boundary $\Gamma$, $\pnu y=(\nabla y\cdot \nu)\big|_{\Gamma}$ is the normal derivative with respect to the outer unit normal field $\nu$ on $\Gamma$, and $\nabla_\Gamma y$ is the tangential gradient defined as $\nabla_\Gamma y=\nabla y-(\pnu y) \nu$ for a smooth function $y$. Due to the linearity of the problem, the pair of initial data $(y_0,y_{\Gamma,0})$ is assumed to be zero. Finally, $\Delta_\Gamma$ denotes the Laplace-Beltrami operator acting on $\Gamma$ (considered a compact Riemannian manifold).

In this paper, we study the \emph{Inverse Source Problem} {\bf (ISP)} of simultaneous identifying the pair of unknown spatial-temporal sources $f(x,t)$ and $f_{\Gamma}(x,t)$ in \eqref{intro:eq:01} from the following final time measured outputs
\begin{align}\label{(A1-1)}
\begin{cases}
u_T(x):=y(x,T), &x\in \Omega,\\
u_{T,\Gamma}(x):=y_{\Gamma}(x,T), &x\in \Gamma,
\end{cases}
\end{align}
where $y(x,T)\equiv y(x,t)\vert_{t=T}$ and $y_{\Gamma}(x,T)\equiv y_{\Gamma}(x,t)\vert_{t=T}$ are appropriately defined traces, $(y(x,t),y_\Gamma(x,t))$ is the weak solution of the initial-boundary value problem \eqref{intro:eq:01} corresponding to $(f(x,t), f_{\Gamma}(x,t))$, and $u_T(x)$, $u_{T,\Gamma}(x)$ represent the measured outputs containing a random noise.

Dynamic boundary condition models (such as \eqref{intro:eq:01}) have attracted increasing interest in recent years. They arise in many models of applied sciences and engineering such as heat transfer, fluid dynamics, the biology of cells, etc; see, for instance, \cite{Eg18,GM13,La32}. The reader may refer to \cite{Go'06} and \cite{Sa20} for physical interpretations of dynamical boundary conditions. In particular, the null controllability of parabolic equations has been considered in \cite{MMS'17}; the cubic Ginzburg-Landau equation in \cite{CMM23} and the Schr\"odinger equation in \cite{MM23}.

\subsection{Literature on inverse problems}
Here, we briefly review some relevant works related to inverse source problems of simultaneous determination of the source terms for evolution PDEs, in particular for the Schr\"odinger equation under consideration. The inverse source problems of simultaneous determination of source terms in linear parabolic and hyperbolic problems from the final overdetermination have been studied in \cite{Ha'07} and \cite{Ha091}. A weak solution approach combined with the quasi-solution and adjoint methods was proposed, and explicit gradient formulas for both problems were derived. This approach also provides a monotone iteration scheme to reconstruct unknown parameters, leading to rapid numerical reconstruction. One could also refer to \cite{CL0623}. It has been successfully applied to reconstruct forcing terms in wave equations \cite{CGMZ232}, Euler-Bernoulli beam equation \cite{Ha09,Ha19}, source terms and initial temperatures in heat equations with dynamic boundary conditions \cite{ACM'22,CGMZ23}. For a comprehensive study on this method, the reader may refer to the book \cite{HR'21}. It should be emphasized that this method has not been applied to the Schr\"odinger equation up to our knowledge.

In addition to the aforementioned, problems related to uniqueness and stability have also been extensively studied. The problem of determining a spatial potential from boundary measurement using global Carleman estimates, leading to Lipschitz stability, was investigated in \cite{BP08,BP02}. This technique has then been extended to a similar problem with a discontinuous principal coefficient in \cite{BM08}. In \cite{MOR08}, the authors showed stability results under less boundary or internal measurements using degenerate Carleman weights. Some one-dimensional inverse source and potential problems based on boundary measurements have been studied using the boundary control method \cite{AM12,AMR14}. In \cite{YFL19}, the authors determined the wave function from boundary measurements using an improved kernel method. Regularization and optimal error bounds are given with numerical examples. The authors of \cite{IY24} have proven the Lipschitz stability for real-valued source terms and potentials from data taken at the terminal time $T$ with a boundary measurement.

For inverse parabolic problems with dynamic boundary conditions, we refer to \cite{ACM'21,ACMO'21}, where the Lipschitz stability was proved for source terms and potentials, while the logarithmic stability was established for initial data. Finally, we refer to \cite{CCLL16} for the well-posedness and uniform stability of nonlinear Schr\"odinger equations.

\subsection{Notation and well-posedness of the system}
Throughout the paper, $\Re z$, $\Im z$, $\overline{z}$, and $|z|$ will denote the real part, the imaginary part, the conjugate, and the modulus of the complex number $z$, respectively.

We introduce the Banach space
\begin{align*}
    \mathbb{L}^p:=L^p(\Omega,dx)\times L^p(\Gamma,d\sigma), \qquad 1\le p\le \infty,
\end{align*}
where we denoted by $dx$ and $d\sigma$ the Lebesgue and surface measures on $\Omega$ and $\Gamma$, respectively. In the Hilbert space $\mathbb{L}^2$, we consider the inner product
$$\langle (y,y_\Gamma), (z,z_\Gamma)\rangle_{\mathbb{L}^2} =\Re \int_\Omega y \overline{z}\, dx + \Re \int_\Gamma y_\Gamma \overline{z_\Gamma}\, d\sigma.$$
For $k\in \mathbb{N}$ and $1\leq p\leq \infty$, we define the Banach space
\begin{align*}
    \mathbb{W}^{k,p}:=\left\{(y,y_\Gamma)\in W^{k,p}(\Omega)\times W^{k,p}(\Gamma)\,:\, y\big|_\Gamma =y_\Gamma\right\},
\end{align*}
equipped by the natural norm. In particular, for $k\in \mathbb{N}$, we write
\begin{align*}
    \mathbb{H}^k:=\mathbb{W}^{k,2}.
\end{align*}
We emphasize that, for all $k\in\mathbb{N}$, $\mathbb{H}^{k}$ is a Hilbert space with the inner product
\begin{align*}
    \langle (u,u_\Gamma), (v,v_\Gamma) \rangle_{\mathbb{H}^k} = \langle u,v \rangle_{H^k(\Omega)} + \langle u_\Gamma ,v_\Gamma \rangle_{H^k(\Gamma)}.
\end{align*}

To simplify the computation in the well-posedness of \eqref{intro:eq:01} and the Fréchet gradient formula, we henceforth assume that the coefficients $\vec{p}$ and $\vec{p}_\Gamma$ in \eqref{intro:eq:01} have the following form:
\begin{align}
    \label{assumptions:p:p:gamma}
    \vec{p}(x)=-i\vec{r}(x)\;\text{ in }\Omega\qquad\text{and}\qquad  \vec{p}_\Gamma(x)=-i\vec{r}_\Gamma(x)\;\text{ on }\Gamma,
\end{align}
where $(\vec{r},\vec{r}_\Gamma)\in [\mathbb{W}^{1,\infty}]^N$ is real-valued. Otherwise, a general class of drift coefficients can be recast to \eqref{assumptions:p:p:gamma}; see \cite{LTZ04} for more details.
We have the following result:
\begin{proposition}
    \label{proposition:wellposedness}
    Consider $d,\gamma>0$.
    \begin{itemize}
        \item[(a)] Suppose that $(a,a_\Gamma)\in \mathbb{L}^\infty$ and $(f,f_\Gamma)\in L^1(0,T;\mathbb{L}^2)$. Then, there exists a unique weak solution $(y,y_\Gamma)\in C^0([0,T];\mathbb{L}^2)$ of \eqref{intro:eq:01}. Moreover, there exists a constant $C>0$ independent of $(f,f_\Gamma)$ such that
        \begin{align*}
            \|(y,y_\Gamma)\|_{C^0([0,T];\mathbb{L}^2)}\leq C \|(f,f_\Gamma)\|_{L^1(0,T;\mathbb{L}^2)}.
        \end{align*}
        \item[(b)] Suppose that $(a,a_\Gamma)\in \mathbb{W}^{1,\infty}$ and $(f,f_\Gamma)\in L^2(0,T;\mathbb{H}^1)$. Then \eqref{intro:eq:01} has a unique solution $(y,y_\Gamma)\in C^0([0,T];\mathbb{H}^1)$ with extra regularity $\pnu y\in L^2(\Sigma)$. In addition, there exists a constant $C>0$ independent of $(f,f_\Gamma)$ such that 
        \begin{align*}
            \|(y,y_\Gamma)\|_{C^0([0,T];\mathbb{H}^1)}+ \|\partial_\nu y\|_{L^2(\Sigma)}\leq C \|(f,f_\Gamma)\|_{L^2(0,T;\mathbb{H}^1)}.
        \end{align*}
    \end{itemize}
\end{proposition}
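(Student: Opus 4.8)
The plan is to establish well-posedness by combining semigroup theory with energy estimates. First I would rewrite \eqref{intro:eq:01} as an abstract Cauchy problem in $\mathbb{L}^2$. Define the operator $\mathcal{A}$ acting as $(y,y_\Gamma)\mapsto (id\Delta y - i\vec p\cdot\nabla y + iay,\, -id\pnu y + i\gamma\Delta_\Gamma y_\Gamma - i\vec p_\Gamma\cdot\nabla_\Gamma y_\Gamma + ia_\Gamma y_\Gamma)$ with domain $D(\mathcal{A})=\{(y,y_\Gamma)\in\mathbb{H}^2\ :\ (id\Delta y,-id\pnu y+i\gamma\Delta_\Gamma y_\Gamma)\in\mathbb{L}^2\}$, so that \eqref{intro:eq:01} becomes $\pt(y,y_\Gamma)=\mathcal{A}(y,y_\Gamma)+(f,f_\Gamma)$ with zero initial data. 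Using the assumption \eqref{assumptions:p:p:gamma}, the drift terms $-i\vec p\cdot\nabla y = -\vec r\cdot\nabla y$ become first-order terms with real coefficients; the key computation is that for the principal part $\mathcal{A}_0$ (with $a=a_\Gamma=0$, $\vec r=\vec r_\Gamma=0$) one has, via Green's formula on $\Omega$ together with the Green formula on $\Gamma$ for $\Delta_\Gamma$, that $\langle \mathcal{A}_0(y,y_\Gamma),(y,y_\Gamma)\rangle_{\mathbb{L}^2}=0$, i.e. $\mathcal{A}_0$ is skew-adjoint, and hence by Stone's theorem generates a unitary $C^0$-group on $\mathbb{L}^2$. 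The lower-order terms $\vec r\cdot\nabla y$ etc.\ are not $\mathbb{L}^2$-bounded, so I would instead absorb the full first-order-plus-potential operator and check $m$-dissipativity of $\pm(\mathcal{A}-\lambda)$ directly, or alternatively treat $\mathbb{H}^1$ as the base space first (see below) and then interpolate/bootstrap.

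For part (a), once the group $e^{t\mathcal{A}_0}$ on $\mathbb{L}^2$ is in hand, I would treat $\mathcal{B}(y,y_\Gamma):=(-\vec r\cdot\nabla y+iay,-\vec r_\Gamma\cdot\nabla_\Gamma y_\Gamma+ia_\Gamma y_\Gamma)$. Since $\vec r\cdot\nabla$ is only relatively bounded, the cleanest route is: establish the $\mathbb{H}^1$-theory of part (b) first, which gives a group on $\mathbb{H}^1$, and then note that $\mathcal{B}:\mathbb{H}^1\to\mathbb{L}^2$ is bounded — but this does not directly give an $\mathbb{L}^2$ solution for $\mathbb{L}^2$ data. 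The honest approach for (a) is an a priori energy estimate: multiply the first equation by $\overline y$, integrate over $\Omega$, multiply the second by $\overline{y_\Gamma}$, integrate over $\Gamma$, add, and take the imaginary part. The second-order terms combine (using $y|_\Gamma=y_\Gamma$ and the transmission structure of the boundary condition) to a purely real quantity that drops out; the potential terms contribute $O(\|(a,a_\Gamma)\|_{\mathbb{L}^\infty}\|(y,y_\Gamma)\|_{\mathbb{L}^2}^2)$; the drift terms, after one integration by parts, contribute $\tfrac12\int(\nabla\cdot\vec r)|y|^2 + \tfrac12\int_\Gamma(\nabla_\Gamma\cdot\vec r_\Gamma)|y_\Gamma|^2 + (\text{boundary terms from }\vec r)$, all controlled by $\|(\vec r,\vec r_\Gamma)\|_{\mathbb{W}^{1,\infty}}\|(y,y_\Gamma)\|_{\mathbb{L}^2}^2$. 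This yields $\tfrac{d}{dt}\|(y,y_\Gamma)\|_{\mathbb{L}^2}^2\leq C\|(y,y_\Gamma)\|_{\mathbb{L}^2}^2+2\|(y,y_\Gamma)\|_{\mathbb{L}^2}\|(f,f_\Gamma)\|_{\mathbb{L}^2}$, and Grönwall's inequality gives the stated bound in $C^0([0,T];\mathbb{L}^2)$; existence and uniqueness then follow by the standard density/Galerkin argument or by the variation-of-constants formula using $e^{t\mathcal A}$ and the boundedness of $\mathcal B$ from $D(\mathcal A)$.

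For part (b), I would work in $\mathbb{H}^1$ and use the stronger regularity of the data and coefficients. The abstract operator $\mathcal A$ (now with $(a,a_\Gamma)\in\mathbb{W}^{1,\infty}$) generates a $C^0$-group on $\mathbb{H}^1$; one checks this by verifying that $\mathcal A$ maps $D(\mathcal A)$ into $\mathbb{H}^1$ and that $\mathcal A$ is skew-adjoint up to a bounded perturbation in the appropriate $\mathbb{H}^1$-compatible inner product — equivalently, commute the equation with one spatial derivative (or with the group generator of the principal part) and run the same energy estimate at the $\mathbb{H}^1$ level, picking up commutator terms $[\vec r\cdot\nabla,\nabla]$, $[a,\nabla]$, $[\Delta_\Gamma,\nabla_\Gamma]$ on the manifold (curvature terms), all of which are lower order and bounded because $\vec r,a\in\mathbb{W}^{1,\infty}$ and $\Gamma$ is $C^2$. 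This produces $\|(y,y_\Gamma)\|_{C^0([0,T];\mathbb{H}^1)}\leq C\|(f,f_\Gamma)\|_{L^2(0,T;\mathbb{H}^1)}$. The extra regularity $\pnu y\in L^2(\Sigma)$ is a hidden-regularity / trace estimate: I would obtain it by a multiplier argument — multiply the interior equation by $h\cdot\nabla\overline y$ for a vector field $h\in C^1(\overline\Omega;\mathbb{R}^N)$ with $h=\nu$ on $\Gamma$, integrate over $Q$, integrate by parts, and read off $\int_\Sigma|\pnu y|^2$ in terms of $\|(y,y_\Gamma)\|_{C^0([0,T];\mathbb{H}^1)}$ and $\|f\|_{L^2(Q)}$, plus the boundary term $\int_\Sigma f_\Gamma(h\cdot\nabla\overline y)$ which is absorbed by the $\mathbb{H}^1(\Gamma)$-trace and the $L^2(0,T;H^1(\Gamma))$ bound on $f_\Gamma$.

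\textbf{Main obstacle.} The principal difficulty is the interaction between the interior equation and the dynamic boundary equation through the term $-d\pnu y$: this coupling is exactly what makes $\mathcal A$ skew-adjoint (so the sign must be tracked with care), but it also means that $\pnu y$ is \emph{not} a priori in $L^2(\Sigma)$ and must be recovered a posteriori — so establishing the hidden regularity $\pnu y\in L^2(\Sigma)$ in part (b), and making sure the multiplier-identity boundary terms genuinely close against the available norms (in particular handling the Laplace–Beltrami term and the $\vec r_\Gamma$-drift on the curved boundary), is where the real work lies. The $\mathbb{L}^2$-theory of part (a) with only relatively bounded drift is a secondary subtlety, handled by the Grönwall energy argument rather than by bounded perturbation of the group.
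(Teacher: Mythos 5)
Your proposal follows essentially the same route as the paper's Appendix~\ref{appendix:A}: generation of a semigroup via dissipativity of the (shifted) drift--Wentzell operator on $D(A_W^{1/2})\equiv\mathbb{H}^1$, an $\mathbb{L}^2$ energy estimate by density (your ``imaginary part of the original equation'' is the paper's ``real part'' of the equation rewritten as $\pt u - di\Delta u+\cdots=g$), an $\mathbb{H}^1$ estimate obtained by testing against the Wentzell Laplacian of the solution (the paper multiplies by $-d\Delta\overline{u}$ and $-\gamma\Delta_\Gamma\overline{u}_\Gamma+d\pnu\overline{u}$, which is your ``commute with the generator of the principal part'' option), and hidden regularity for $\pnu y$ via a Machtyngier-type multiplier $\vec q\cdot\nabla\overline u$ with $\vec q=\nu$ on $\Gamma$. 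The only point you do not flag, which the paper itself quietly imposes in the appendix, is the sign condition $\vec r\cdot\nu\le 0$ on $\Gamma$ used to close the $\mathbb{H}^1$ dissipativity and energy estimates.
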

The proof of Proposition \ref{proposition:wellposedness} relies on some elements of the semigroup theory. We sketch a proof in the Appendix \ref{appendix:A} for completeness.

\subsection{Input-output operator}
We introduce the \emph{input-output operators}
\begin{align}\label{(A2-1)}
\begin{cases}
(\Psi_1 F)(x):=y(x,T;F), &x\in \Omega,\\
(\Psi_2 F)(x) :=y_{\Gamma}(x,T;F), &x\in \Gamma,
\end{cases}
\end{align}
associated with the inverse source problem \eqref{intro:eq:01}-\eqref{(A1-1)}, where
\begin{equation}\label{(A2-2)}
F(x,t):= (f(x,t), f_{\Gamma}(x,t)) \in \mathcal{F},
\end{equation}
and $\mathcal{F} \subset L^2(0,T;\mathbb{L}^2)$ is an appropriately defined \emph{set of admissible spatial-temporal sources} that is bounded, closed and convex.

In view of these operators, the inverse source problem can be reformulated as the  following system of linear operator equations:
\begin{align}\label{(A2-3)}
\begin{cases}
(\Psi_1 F)(x)=u_T(x), &x\in \Omega,\\
(\Psi_2 F)(x)=u_{T,\Gamma}(x), &x\in \Gamma, \quad F\in \mathcal{F},
\end{cases}
\end{align}
where $u_T \in L^2(\Omega) $ and $u_{T,\Gamma} \in L^2(\Gamma) $ are the measured outputs defined in \eqref{(A1-1)}. We will also write $U_T:=(u_T,u_{T,\Gamma})$.

However, due to measurement errors in the outputs $u_T$ and $u_{T,\Gamma}$, exact equality in  the equations \eqref{(A2-3)} can not be satisfied. Hence one needs to introduce the Tikhonov functional
\begin{equation}\label{(A2-4)}
J(F)=\frac{1}{2}\, \Vert \Psi_1 F -u_T\Vert^2_{L^2(\Omega)}+ \frac{1}{2}\, \Vert
\Psi_2 F -u_{T,\Gamma}\Vert^2_{L^2(\Gamma)},
\end{equation}
and reformulate the inverse source problem \eqref{intro:eq:01}-\eqref{(A1-1)} as the following minimization problem
\begin{equation}
J(F^*)=\inf_{F \in \mathcal{F}}\, J(F).
\end{equation}
This is called the quasi-solution method; see Ivanov \cite{Iv63}.

\begin{proposition}
    \label{prop:input-output}
    The input-output operator $\Psi:L^2(0,T;\mathbb{H}^1) \to \mathbb{L}^2$ is compact.
\end{proposition}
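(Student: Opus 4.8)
The plan is to realize $\Psi$ as the composition of a bounded linear operator with a compact embedding, and then to use that the composition of a bounded operator with a compact one is compact. First I would invoke the improved regularity in Proposition \ref{proposition:wellposedness}(b): for every admissible $F=(f,f_\Gamma)\in L^2(0,T;\mathbb{H}^1)$, the corresponding weak solution of \eqref{intro:eq:01} satisfies $(y,y_\Gamma)\in C^0([0,T];\mathbb{H}^1)$, so that the terminal value $(y(\cdot,T),y_\Gamma(\cdot,T))$ is well defined as an element of $\mathbb{H}^1$ and
\begin{align*}
\|\Psi F\|_{\mathbb{H}^1}=\|(y(\cdot,T),y_\Gamma(\cdot,T))\|_{\mathbb{H}^1}\le \|(y,y_\Gamma)\|_{C^0([0,T];\mathbb{H}^1)}\le C\,\|F\|_{L^2(0,T;\mathbb{H}^1)}.
\end{align*}
Combined with the linearity of the solution map $F\mapsto(y,y_\Gamma)$, which is inherited from the linearity of \eqref{intro:eq:01}, this shows that $\Psi$, regarded as taking values in $\mathbb{H}^1$, defines a bounded linear operator $\widetilde\Psi:L^2(0,T;\mathbb{H}^1)\to\mathbb{H}^1$.

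Next I would check that the canonical injection $\iota:\mathbb{H}^1\hookrightarrow\mathbb{L}^2$ is compact. By definition $\mathbb{H}^1$ is a closed subspace of $H^1(\Omega)\times H^1(\Gamma)$, and the embedding $H^1(\Omega)\times H^1(\Gamma)\hookrightarrow L^2(\Omega)\times L^2(\Gamma)=\mathbb{L}^2$ is compact: the factor $H^1(\Omega)\hookrightarrow L^2(\Omega)$ is compact by the Rellich--Kondrachov theorem since $\Omega$ is bounded with $C^2$ boundary, and $H^1(\Gamma)\hookrightarrow L^2(\Gamma)$ is compact because $\Gamma$ is a compact Riemannian manifold. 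The restriction of a compact embedding to a closed subspace remains compact, so $\iota$ is compact.

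Finally, since $\Psi=\iota\circ\widetilde\Psi$ with $\widetilde\Psi$ bounded linear and $\iota$ compact, $\Psi:L^2(0,T;\mathbb{H}^1)\to\mathbb{L}^2$ is compact, which is the assertion. I do not expect a genuine obstacle here; the only point deserving a word of care is to make sure that the spatial-regularity gain from Proposition \ref{proposition:wellposedness}(b) genuinely places the terminal value in $\mathbb{H}^1$ — in particular that the compatibility condition $y(\cdot,T)\big|_\Gamma=y_\Gamma(\cdot,T)$ holds, which is automatic since $(y(\cdot,t),y_\Gamma(\cdot,t))\in\mathbb{H}^1$ for each $t\in[0,T]$ — so that the factorization through the compact embedding $\iota$ is legitimate.
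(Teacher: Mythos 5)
Your proof is correct and follows essentially the same route as the paper: both rest on the $\mathbb{H}^1$-boundedness of the terminal value supplied by Proposition \ref{proposition:wellposedness}(b) together with the compactness of the embedding $\mathbb{H}^1 \hookrightarrow \mathbb{L}^2$. The only difference is presentational — you phrase the argument as a factorization $\Psi=\iota\circ\widetilde\Psi$ through the compact embedding, while the paper extracts a convergent subsequence from the image of a bounded sequence; these are two formulations of the same idea.
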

\begin{proof}
    Let $(F_k)_{k\in \mathbb{N}}$ be a bounded sequence in $L^2(0,T;\mathbb{H}^1)$. Then, by Proposition \ref{proposition:wellposedness}, the sequence of associated weak solutions $Y(\cdot,\cdot,F_k)_{k\in\mathbb{N}}$ is bounded in $C^0([0,T];\mathbb{H}^1)$. In particular, the sequence $Y_k:=Y(\cdot,T,F_k)$ is bounded in $\mathbb{H}^1$. Thus, by the compact embedding $\mathbb{H}^1 \hookrightarrow \mathbb{L}^2$, there exists a subsequence of $(Y_k)_{k\in\mathbb{N}}$ which converges in $\mathbb{L}^2$. This implies that the input-output operator $\Psi$ is compact.
\end{proof}


From Proposition \ref{prop:input-output}, it is clear that the inverse problem {\bf (ISP)} is ill-posed in the sense of Hadamard. Due to the ill-posedness of \eqref{(A2-3)}, one usually considers a regularized version of \eqref{(A2-4)} (e.g. a Tikhonov regularization). More precisely, for a regularizing parameter $\epsilon>0$, we introduce the regularized functional
\begin{align}
J_\epsilon(F)=\frac{1}{2}\, \Vert \Psi_1 F -u_T\Vert^2_{L^2(\Omega)}+ \frac{1}{2}\, \Vert
\Psi_2 F -u_{T,\Gamma}\Vert^2_{L^2(\Gamma)}+\frac{\epsilon}{2} \|F\|^2_{L^2(0,T;\mathbb{L}^2)},\quad F\in\mathcal{F}.
\end{align}
In the sequel, we omit the Tikhonov regularization, leveraging the regularization property of the adjoint Landweber iteration (see \cite{Nem86}). The reader may refer to \cite{EHN'00} for regularization of ill-posed problems.

\subsection*{Outline of the paper}
The rest of the article is organized as follows. In Section \ref{section:Frechet:differentiability}, we focus on the properties of the Tikhonov functional, that is, we characterize its Fr\'echet derivative via a suitable adjoint problem. In Section \ref{section:Existence:quasi}, we give sufficient conditions for the existence and uniqueness of quasi-solutions to {\bf (ISP)}. In Section \ref{section:numerical:experiments}, we illustrate our theoretical findings by some numerical experiments to reconstruct source terms in the 1D system. In Section \ref{section_further:comments}, we give additional comments concerning the theoretical and numerical results obtained in this paper. Finally, Appendix \ref{appendix:A} is devoted to the well-posedness of the system \eqref{intro:eq:01}.
\section{Fr\'echet differentiability and gradient formula}
\label{section:Frechet:differentiability}
Now, we derive a gradient formula for the Tikhonov functional $J$ which is the key result for the sequel.
\begin{proposition}
    \label{prop:gradient:J}
    The Tikhonov functional $J\colon \mathcal{F}\rightarrow \mathbb{R}$ is Fr\'echet differentiable and its gradient at each $F\in \mathcal{F}$ is given by
    \begin{align}
        \label{J:prime:phi}
        J'(F)=\Phi,
    \end{align}
    where $\Phi(\cdot,\cdot;F)=(\phi,\phi_\Gamma)$ is the unique weak solution of the following adjoint system
    \begin{align}\label{adjeq}
        \begin{cases}
            i\pt \phi +d\Delta \phi + i\vec{r}\cdot \nabla \phi +b \phi =0&\text{ in }Q,\\
            i\pt \phi_\Gamma -d\pnu \phi +\gamma \Delta_\Gamma \phi_\Gamma + i\vec{r}_\Gamma\cdot\nabla_\Gamma \phi_\Gamma  +b_\Gamma \phi_\Gamma =0&\text{ on }\Sigma,\\
            \phi\big|_{\Gamma}=\phi_\Gamma&\text{ on }\Sigma,\\
            (\phi(\cdot,T),\phi_\Gamma (\cdot,T))=i(Y(\cdot,T;F)-U_T),&\text{ in }\Omega\times \Gamma,
        \end{cases}
    \end{align}
    where the coefficients $b$ and $b_\Gamma$ are given by
    \begin{align}
        \label{def:b:b:gamma}
        b:=i\mathrm{div}(\vec{r})+\overline{a},\qquad b_\Gamma:= i\mathrm{div}(\vec{r}_\Gamma)-i\vec{r}\cdot \nu +\overline{a}_\Gamma.
    \end{align}
\end{proposition}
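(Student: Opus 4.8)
The plan is to combine the quadratic structure of $J$ with a Lagrange-type duality identity relating the linearized forward problem to the adjoint system \eqref{adjeq}. First I would record that, since the solution map $F\mapsto(y,y_\Gamma)$ is linear and the input–output operator $\Psi=(\Psi_1,\Psi_2)$ is bounded from $L^1(0,T;\mathbb L^2)$ (in particular from $L^2(0,T;\mathbb L^2)$) to $\mathbb L^2$ by Proposition \ref{proposition:wellposedness}(a), one has for $F,\delta F\in\mathcal F$
\begin{align*}
 J(F+\delta F)-J(F)=\langle \Psi F-U_T,\ \Psi\,\delta F\rangle_{\mathbb L^2}+\tfrac12\,\|\Psi\,\delta F\|_{\mathbb L^2}^2 .
\end{align*}
The remainder is $O(\|\delta F\|^2)$ by the continuity estimate, so $J$ is Fréchet differentiable with $J'(F)\,\delta F=\langle \Psi F-U_T,\ \Psi\,\delta F\rangle_{\mathbb L^2}$, and everything reduces to identifying the Riesz representative of this bounded linear form in $L^2(0,T;\mathbb L^2)$ with $\Phi$. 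Denote by $\delta Y=(\delta y,\delta y_\Gamma)$ the weak solution of \eqref{intro:eq:01} with source $\delta F=(\delta f,\delta f_\Gamma)$ and zero initial data, so that $\Psi\,\delta F=(\delta y(\cdot,T),\delta y_\Gamma(\cdot,T))$.

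Next I would settle the well-posedness of \eqref{adjeq}: under \eqref{assumptions:p:p:gamma} and $(a,a_\Gamma)\in\mathbb L^\infty$ the coefficients in \eqref{def:b:b:gamma} satisfy $(b,b_\Gamma)\in\mathbb L^\infty$, and the time reversal $\tau=T-t$ turns \eqref{adjeq} into a forward problem of exactly the type \eqref{intro:eq:01}. Hence Proposition \ref{proposition:wellposedness}(a) gives a unique $\Phi=(\phi,\phi_\Gamma)\in C^0([0,T];\mathbb L^2)$, and part (b) upgrades this to $\Phi\in C^0([0,T];\mathbb H^1)$ with $\partial_\nu\phi\in L^2(\Sigma)$ whenever the terminal datum $i(Y(\cdot,T;F)-U_T)$ lies in $\mathbb H^1$.

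The core of the argument is the duality identity. I would multiply the interior equation for $\delta y$ (with $-\vec p\cdot\nabla=i\vec r\cdot\nabla$ by \eqref{assumptions:p:p:gamma}) by $\overline{\phi}$ and integrate over $Q$, multiply the boundary equation for $\delta y_\Gamma$ by $\overline{\phi_\Gamma}$ and integrate over $\Sigma$, and add the two. Integrating by parts in $t$ (using $\delta Y(\cdot,0)=0$) and in the spatial variables — Green's formula in $\Omega$, and integration by parts on the closed manifold $\Gamma$ for the $\Delta_\Gamma$ and $\nabla_\Gamma$ terms — and using $\delta y|_\Gamma=\delta y_\Gamma$, $\phi|_\Gamma=\phi_\Gamma$, the interior contributions collapse onto the adjoint operators, which vanish by \eqref{adjeq} thanks to the precise choice \eqref{def:b:b:gamma} of $b$ and $b_\Gamma$. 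The delicate bookkeeping is on $\Sigma$: the terms $d\iint_\Sigma\!\big(\partial_\nu\delta y\,\overline{\phi_\Gamma}-\delta y_\Gamma\,\overline{\partial_\nu\phi}\big)$ produced by $d\Delta$ in $Q$, the term $i\iint_\Sigma(\vec r\cdot\nu)\,\delta y_\Gamma\,\overline{\phi_\Gamma}$ produced by the drift, and the term $-d\iint_\Sigma\partial_\nu\delta y\,\overline{\phi_\Gamma}$ already present in the boundary equation must cancel exactly — this is precisely what forces the extra summand $-i\vec r\cdot\nu$ in $b_\Gamma$ and the sign of $-d\partial_\nu\phi$ in the adjoint boundary equation. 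After these cancellations only the source terms and the $t=T$ traces survive, giving
\begin{align*}
 \iint_Q\delta f\,\overline{\phi}\,dx\,dt+\iint_\Sigma\delta f_\Gamma\,\overline{\phi_\Gamma}\,d\sigma\,dt
 = i\int_\Omega\delta y(\cdot,T)\,\overline{\phi(\cdot,T)}\,dx+i\int_\Gamma\delta y_\Gamma(\cdot,T)\,\overline{\phi_\Gamma(\cdot,T)}\,d\sigma .
\end{align*}

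To conclude, I would take real parts and substitute the terminal condition $\Phi(\cdot,T)=i(Y(\cdot,T;F)-U_T)$, so that $i\,\overline{\phi(\cdot,T)}=\overline{y(\cdot,T;F)-u_T}$ and likewise on $\Gamma$; the right-hand side then equals $\langle \Psi F-U_T,\ \Psi\,\delta F\rangle_{\mathbb L^2}=J'(F)\,\delta F$, while the left-hand side equals $\langle \Phi,\ \delta F\rangle_{L^2(0,T;\mathbb L^2)}$, which yields $J'(F)=\Phi$. The integrations by parts above are legitimate when $\delta F\in L^2(0,T;\mathbb H^1)$ and $Y(\cdot,T;F)-U_T\in\mathbb H^1$, since then $\delta Y$ and $\Phi$ belong to $C^0([0,T];\mathbb H^1)$ with normal traces in $L^2(\Sigma)$ by Proposition \ref{proposition:wellposedness}(b); the general case follows by approximating $\delta F$ and $U_T$ in the $L^2$-norms and passing to the limit, both sides of the identity being continuous in those topologies by the estimates of Proposition \ref{proposition:wellposedness}. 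I expect the main obstacle to be exactly this boundary-term bookkeeping in the Lagrange identity, together with the justification of the integrations by parts under the limited ("hidden") trace regularity $\partial_\nu\phi\in L^2(\Sigma)$ rather than classical traces.
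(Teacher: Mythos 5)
Your proposal is correct and follows essentially the same route as the paper: expand the quadratic functional to isolate the linear term plus an $O(\|\delta F\|^2)$ remainder, then convert the terminal-time pairing $\langle \Psi F-U_T,\Psi\,\delta F\rangle_{\mathbb L^2}$ into $\langle \Phi,\delta F\rangle_{L^2(0,T;\mathbb L^2)}$ via the duality (integration-by-parts) identity between the sensitivity system \eqref{problem:delta:y} and the adjoint system \eqref{adjeq}. You in fact supply more detail than the paper does at the step it dismisses with ``by integration by parts, it is clear that'' --- namely the boundary cancellations forcing the $-i\vec r\cdot\nu$ term in $b_\Gamma$ and the density argument justifying the computation under the hidden trace regularity --- and these details check out.
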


\begin{proof}
    Consider $F,\delta F\in \mathcal{F}$ such that $F+\delta F\in \mathcal{F}$. Let us compute the difference
    \begin{align*}
        \delta J(F):=J(F+\delta F)- J(F).
    \end{align*}
Then, straightforward computations show that
    \begin{align*}
        \delta J(F)&=\dfrac{1}{2}\|Y(\cdot,T;F+\delta F)- U_T\|_{\mathbb{L}^2}^2 -\dfrac{1}{2}\| Y(\cdot,T;F)-U_T\|_{\mathbb{L}^2}^2\\
        &= \dfrac{1}{2}\int_\Omega (|y(\cdot,T;F+\delta F) - u_T|^2 - |y(\cdot,T;F) - u_T|^2)\,dx\\
        &+\frac{1}{2}\int_{\Gamma} (|y_\Gamma(\cdot,T;F+\delta F)-u_{T,\Gamma}|^2 - |y_\Gamma (\cdot,T;F)-u_{T,\Gamma}|^2)\,d\sigma.
    \end{align*}
Now, using the complex identity
    \begin{align*}
        \frac{1}{2}(|x-z|^2-|y-z|^2)=\Re [(x-y)\overline{(y-z)}] +\dfrac{1}{2}|x-y|^2\qquad \forall x,y,z\in \mathbb{C},
    \end{align*}
we have
    \begin{align}
        \nonumber
        \delta J(F)=& \Re\int_\Omega (y(\cdot,T;F+\delta F)-y(\cdot,T;F))\overline{(y(\cdot,T;F)-u_T})\,dx \\
        \nonumber
        &+ \dfrac{1}{2}\int_\Omega |y(\cdot,T;F+\delta F)-y(\cdot,T;F)|^2\,dx\\
        \nonumber
        &+\Re \int_\Gamma (y_\Gamma (\cdot,T;F+\delta F)-y_\Gamma (\cdot,T;F))\overline{(y_\Gamma (\cdot,T;F)-u_{T,\Gamma})}\, d\sigma \\
        \nonumber
        &+\dfrac{1}{2}\int_{\Gamma} |y_\Gamma (\cdot,T;F+\delta F)-y_\Gamma (\cdot,T;F)|^2\,d\sigma ,\\
        \label{delta:J:F:01}
        \begin{split}
        =& -\Im\int_\Omega \delta y(\cdot,T;F) \overline{\phi (\cdot,T;F)}\,dx + \dfrac{1}{2}\int_\Omega |\delta y(\cdot,T;F)|^2\,dx\\
        &-\Im  \int_{\Gamma } \delta y_\Gamma (\cdot,T;F) \overline{\phi_\Gamma(\cdot,T;F)}\,d\sigma +\dfrac{1}{2}\int_{\Gamma} |\delta y_\Gamma (\cdot,T;F)|^2\,d\sigma,
        \end{split}
    \end{align}
    where $\delta Y=(\delta y,\delta y_\Gamma)$ is the solution of the following sensitivity system
    \begin{align}
        \label{problem:delta:y}
        \begin{cases}
            i\pt (\delta y)+d\Delta (\delta y) +i\vec{r}\cdot \nabla (\delta y) +a(x)(\delta y)=\delta f&\text{ in }Q,\\
            i\pt (\delta y_\Gamma) -d\pnu (\delta y)+\gamma \Delta_\Gamma (\delta y_\Gamma) +i\vec{r}_\Gamma \cdot \nabla_\Gamma (\delta y_\Gamma) +a_\Gamma(x)(\delta y_\Gamma) =\delta f_\Gamma&\text{ on }\Sigma,\\
            \delta y\big|_{\Gamma}=\delta y_\Gamma &\text{ on }\Sigma,\\
            (\delta y(\cdot,0),\delta y(\cdot,0))=(0,0)&\text{ in }\Omega\times \Gamma.
        \end{cases}
    \end{align}
Then by integration by parts, it is clear that \eqref{delta:J:F:01} can be written as
    \begin{align}
        \label{delta:J:F:02}
        \begin{split}
        \delta J(F)=&\Re \iint_{Q} \delta f\overline{\phi}\,dx\,dt + \Re \iint_{\Sigma} \delta f_\Gamma \overline{\phi_\Gamma}\,d\sigma\,dt +  \dfrac{1}{2}\int_{\Omega} |\delta y(\cdot,T;F)|^2\,dx\\
        &+\dfrac{1}{2}\int_\Gamma |\delta y_\Gamma (\cdot,T;F)|^2\,d\sigma .
        \end{split}
    \end{align}
Therefore, the equality \eqref{delta:J:F:02} implies the identity \eqref{J:prime:phi}.
\end{proof}

Next, we prove a crucial lemma when dealing with gradient iterative methods.
\begin{lemma}
    \label{Lemma:Frechet:lipschitz}
     The Fr\'echet gradient $J'$ is Lipschitz continuous, i.e., there exists a constant $L>0$ such that for all $F,\delta F\in \mathcal{F}$ such that $F+\delta F\in \mathcal{F}$,
     \begin{align}
        \label{eq:Lipchitz:gradient}
         \|J'(F+\delta F)-J'(F)\|_{L^2(0,T;\mathbb{L}^2)}\leq L \|\delta F\|_{L^2(0,T;\mathbb{L}^2)}.
     \end{align}
\end{lemma}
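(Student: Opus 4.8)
The plan is to exploit the explicit gradient formula $J'(F) = \Phi(\cdot,\cdot;F)$ from Proposition \ref{prop:gradient:J}, so that the difference $J'(F+\delta F) - J'(F)$ becomes the solution of an adjoint system whose only inhomogeneity is the \emph{terminal} data. More precisely, by linearity of the adjoint problem \eqref{adjeq} in its terminal condition, $\Phi(\cdot,\cdot;F+\delta F) - \Phi(\cdot,\cdot;F)$ solves the same homogeneous adjoint equations with terminal data $i(Y(\cdot,T;F+\delta F) - Y(\cdot,T;F)) = i\,\delta Y(\cdot,T)$, where $\delta Y = (\delta y,\delta y_\Gamma)$ is the solution of the sensitivity system \eqref{problem:delta:y} with source $\delta F$. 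So the estimate reduces to chaining two well-posedness bounds.

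\textbf{Key steps.} First I would record that the adjoint system \eqref{adjeq} is, after the time reversal $t \mapsto T-t$, an initial-boundary value problem of exactly the same structural type as \eqref{intro:eq:01} (Schr\"odinger operator with a drift of the form \eqref{assumptions:p:p:gamma} and bounded potentials $b,b_\Gamma$), so Proposition \ref{proposition:wellposedness}(a) applies to it with the initial datum replaced by the terminal datum. This gives a constant $C_1 > 0$, depending only on $d,\gamma,T$ and the coefficients, such that
\begin{align*}
    \|\Phi(\cdot,\cdot;F+\delta F) - \Phi(\cdot,\cdot;F)\|_{C^0([0,T];\mathbb{L}^2)} \leq C_1 \|\delta Y(\cdot,T)\|_{\mathbb{L}^2}.
\end{align*}
Second, the sensitivity system \eqref{problem:delta:y} is again of the form \eqref{intro:eq:01} with source $\delta F \in L^2(0,T;\mathbb{L}^2) \subset L^1(0,T;\mathbb{L}^2)$ and zero initial data, so Proposition \ref{proposition:wellposedness}(a) yields a constant $C_2 > 0$ with
\begin{align*}
    \|\delta Y\|_{C^0([0,T];\mathbb{L}^2)} \leq C_2 \|\delta F\|_{L^1(0,T;\mathbb{L}^2)} \leq C_2 \sqrt{T}\,\|\delta F\|_{L^2(0,T;\mathbb{L}^2)};
\end{align*}
in particular $\|\delta Y(\cdot,T)\|_{\mathbb{L}^2} \leq C_2\sqrt{T}\,\|\delta F\|_{L^2(0,T;\mathbb{L}^2)}$. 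Combining the two displays and passing from the $C^0([0,T];\mathbb{L}^2)$ norm on the left to the weaker $L^2(0,T;\mathbb{L}^2)$ norm gives \eqref{eq:Lipchitz:gradient} with $L := \sqrt{T}\,C_1 C_2 \sqrt{T} = T\,C_1 C_2$ (the extra $\sqrt{T}$ coming from $\|\cdot\|_{L^2(0,T;\mathbb{L}^2)} \leq \sqrt{T}\,\|\cdot\|_{C^0([0,T];\mathbb{L}^2)}$).

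\textbf{Main obstacle.} The only genuinely non-routine point is justifying that Proposition \ref{proposition:wellposedness}(a) is applicable to the adjoint system \eqref{adjeq}: one must check that after time reversal the adjoint operator falls under the same semigroup-generation framework used in Appendix \ref{appendix:A}, i.e. that the coefficients $b,b_\Gamma$ defined in \eqref{def:b:b:gamma} lie in $\mathbb{L}^\infty$ (which follows from $(\vec r,\vec r_\Gamma) \in [\mathbb{W}^{1,\infty}]^N$ and $(a,a_\Gamma)\in\mathbb{L}^\infty$, with the trace $\vec r\cdot\nu$ bounded on $\Gamma$) and that the drift again has the special form \eqref{assumptions:p:p:gamma} — indeed $i\vec r\cdot\nabla\phi = -\vec p\cdot\nabla\phi$ with $\vec p = -i\vec r$, so the structural hypothesis is preserved. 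Once this identification is made, everything else is a direct concatenation of the a priori estimates already established, and no new analysis is needed; the constant $L$ is explicit up to the well-posedness constants.
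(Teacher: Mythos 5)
Your proposal is correct and follows essentially the same route as the paper: identify $J'(F+\delta F)-J'(F)$ with the solution $\delta\Phi$ of the homogeneous adjoint system whose terminal datum is $i\,\delta Y(\cdot,T)$, bound $\|\delta\Phi\|_{L^2(0,T;\mathbb{L}^2)}$ by $\|\delta Y(\cdot,T)\|_{\mathbb{L}^2}$ via the $\mathbb{L}^2$ well-posedness estimate, and then bound $\|\delta Y(\cdot,T)\|_{\mathbb{L}^2}$ by $\|\delta F\|_{L^2(0,T;\mathbb{L}^2)}$ using Proposition \ref{proposition:wellposedness}(a) applied to the sensitivity system \eqref{problem:delta:y}. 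Your additional care in checking that the time-reversed adjoint system (with bounded potentials $b,b_\Gamma$ and a drift of the form \eqref{assumptions:p:p:gamma}) falls under the same well-posedness framework, with the initial datum replaced by the terminal one, is a point the paper leaves implicit, so your write-up is if anything slightly more complete.
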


\begin{proof}
    Fix $F, \delta F\in \mathcal{F}$ such that $F+\delta F\in \mathcal{F}$. Then, $\delta \Phi (\cdot,T;F)=(\delta \phi,\delta\phi_\Gamma)$ is the solution of the adjoint system
    \begin{align}
        \begin{cases}
            i\pt (\delta \phi)+d\Delta (\delta \phi)+i\vec{r}\cdot \nabla (\delta \phi) +b(x)(\delta \phi)=0,&\text{ in }Q,\\
            i\pt (\delta \phi_\Gamma)-d\pnu (\delta \phi)+\gamma \Delta_\Gamma (\delta \phi_\Gamma)+i\vec{r}_\Gamma\cdot \nabla_\Gamma (\delta \phi_\Gamma) + b_\Gamma(x)(\delta \phi_\Gamma)=0&\text{ on }\Sigma,\\
            \delta \phi \big|_{\Gamma}=\delta \phi_\Gamma&\text{ on }\Sigma,\\
            (\delta \phi,\delta \phi_\Gamma)(\cdot,T)=i(\delta y(\cdot,T;F),\delta y(\cdot,T;F))&\text{ in }\Omega\times \Gamma,
        \end{cases}
    \end{align}
    where $\delta Y:=(\delta y(\cdot,\cdot;F), \delta y_\Gamma (\cdot,\cdot;F))$ is the solution of \eqref{problem:delta:y} associated to $\delta F$ and $(b,b_\Gamma)$ are the potentials defined in \eqref{def:b:b:gamma}. By Proposition \ref{prop:gradient:J}, we have for some generic constant $C>0$,
    \begin{align*}
        \|J'(F+\delta F)-J'(F)\|_{L^2(0,T;\mathbb{L}^2)}=\|\delta \Phi\|_{L^2(0,T;\mathbb{L}^2)}\leq C \|\delta Y(\cdot,T;F)\|_{\mathbb{L}^2}\leq C\|\delta F\|_{L^2(0,T;\mathbb{L}^2)},
    \end{align*}
    where we have used Proposition \ref{proposition:wellposedness} (part (a)) applied to $\delta Y=(\delta y,\delta y_\Gamma)$. This ends the proof of Lemma \ref{Lemma:Frechet:lipschitz}.
\end{proof}


\section{Existence and uniqueness of the solution to (ISP)}
\label{section:Existence:quasi}
Using the gradient formula \eqref{J:prime:phi} and the sensitivity system \eqref{problem:delta:y}, one can easily prove the monotonicity of the derivative $J':\mathcal{F}\to \mathbb{R}$. This implies the convexity of the functional $J$. Consequently, we have the following existence result.
\begin{corollary}
    The Tikhonov functional $J$ is continuous and convex on $\mathcal{F}$. Then, there exists a minimizer $F^*\in \mathcal{F}$ such that
    \begin{align*}
        J(F^*)=\min_{F\in \mathcal{F}} J(F).
    \end{align*}
\end{corollary}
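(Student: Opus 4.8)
The plan is to verify the two hypotheses of the direct method in the calculus of variations—continuity and convexity of $J$ on the admissible set $\mathcal{F}$—and then invoke the standard existence theorem for a minimizer of a weakly lower semicontinuous convex functional over a bounded, closed, convex subset of a Hilbert space. Recall that $\mathcal{F}\subset L^2(0,T;\mathbb{L}^2)$ was assumed bounded, closed, and convex, which is exactly the compatibility structure we need.

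First I would establish continuity of $J$ on $\mathcal{F}$. Since $J(F)=\tfrac12\|\Psi_1 F-u_T\|_{L^2(\Omega)}^2+\tfrac12\|\Psi_2 F-u_{T,\Gamma}\|_{L^2(\Gamma)}^2$, it suffices to note that $\Psi=(\Psi_1,\Psi_2)$ is a bounded linear operator from $L^2(0,T;\mathbb{L}^2)$ (or at least from the relevant subspace) into $\mathbb{L}^2$: this follows from Proposition \ref{proposition:wellposedness}(a), which gives $\|Y(\cdot,T;F)\|_{\mathbb{L}^2}\le C\|F\|_{L^1(0,T;\mathbb{L}^2)}\le C'\|F\|_{L^2(0,T;\mathbb{L}^2)}$. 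Boundedness of $\Psi$ plus the fact that $v\mapsto\tfrac12\|v-g\|^2$ is Lipschitz on bounded sets yields continuity (indeed local Lipschitz continuity) of $J$. Second, I would establish convexity: $J$ is a composition of an affine map $F\mapsto\Psi F-U_T$ with the convex quadratic $v\mapsto\tfrac12\|v\|_{\mathbb{L}^2}^2$, hence convex; alternatively, as the excerpt hints, one reads off convexity from the monotonicity of $J'$ (which in turn follows from the gradient formula \eqref{J:prime:phi} together with linearity of the sensitivity system \eqref{problem:delta:y} and the adjoint system: $\langle J'(F+\delta F)-J'(F),\delta F\rangle=\|\delta Y(\cdot,T;F)\|_{\mathbb{L}^2}^2\ge 0$, reading off the first two terms of \eqref{delta:J:F:02}).

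Finally, I would conclude via the direct method. A convex function that is continuous on a convex subset of a Hilbert space is weakly lower semicontinuous there; since $\mathcal{F}$ is bounded, closed, and convex, it is weakly sequentially compact. Take a minimizing sequence $(F_n)\subset\mathcal{F}$ with $J(F_n)\to\inf_{\mathcal{F}}J$; extract a weakly convergent subsequence $F_{n_k}\rightharpoonup F^*$ with $F^*\in\mathcal{F}$ (closedness and convexity give weak closedness via Mazur's lemma); weak lower semicontinuity gives $J(F^*)\le\liminf_k J(F_{n_k})=\inf_{\mathcal{F}}J$, whence $F^*$ is a minimizer. The main obstacle—really the only subtle point—is matching function spaces: $J$ and $\Psi$ must be simultaneously well-defined and continuous on the space in which $\mathcal{F}$ lives, so one should be careful that $\mathcal{F}\subset L^2(0,T;\mathbb{L}^2)$ is the operative ambient space for the weak compactness argument, and that the bound from Proposition \ref{proposition:wellposedness}(a) suffices there (it does, since $L^2(0,T)\hookrightarrow L^1(0,T)$ on the bounded interval $(0,T)$). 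Everything else is the textbook lower-semicontinuity argument.
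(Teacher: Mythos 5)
Your proof is correct and follows essentially the same route the paper takes, which only sketches the argument (monotonicity of $J'$ via the gradient formula $\Rightarrow$ convexity, plus continuity, then existence of a minimizer over the bounded, closed, convex set $\mathcal{F}$). The details you supply---boundedness of the input--output map from Proposition \ref{proposition:wellposedness}(a), the identity $\langle J'(F+\delta F)-J'(F),\delta F\rangle=\|\delta Y(\cdot,T)\|_{\mathbb{L}^2}^2\ge 0$, and the standard weak-compactness/weak-lower-semicontinuity conclusion---are exactly the completion the paper leaves implicit.
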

Since the strict convexity of $J$ is characterized by the strict monotonicity of $J'$, straightforward computation yields the following sufficient condition for uniqueness.
\begin{lemma}
    If the positivity condition
    \begin{align*}
        \int_\Omega |\delta y(\cdot,T;F)|^2\,dx + \int_{\Gamma} |\delta y_\Gamma (\cdot,T;F)|^2\,d\sigma >0\qquad \forall F\in \mathcal{V},
    \end{align*}
    holds on a closed convex subset $\mathcal{V}\subset \mathcal{F}$, then the problem {\bf (ISP)} admits at most one solution in $\mathcal{V}$.
\end{lemma}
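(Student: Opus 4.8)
The plan is to characterize strict convexity of $J$ through the strict monotonicity of its gradient $J'$, and to reduce the latter to the stated positivity condition via the identity already obtained in the proof of Proposition \ref{prop:gradient:J}. First I would recall that, since $J$ is convex on the convex set $\mathcal{F}$ (as noted just above), it suffices to show that $J$ is \emph{strictly} convex on $\mathcal{V}$, for then a minimizer of $J$ restricted to $\mathcal{V}$ is unique, and any solution of {\bf (ISP)} lying in $\mathcal{V}$ is in particular a zero of $J$ on $\mathcal{V}$, hence a (global) minimizer there, hence unique.

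Next I would extract the monotonicity inequality from the gradient formula. Fix $F,\delta F$ with $F,F+\delta F\in\mathcal{V}$. From \eqref{delta:J:F:02} with the roles of the source increment in mind, and using that $\delta Y$ depends linearly on $\delta F$ while $\Phi$ depends affinely on $F$, one computes
\begin{align*}
    \langle J'(F+\delta F)-J'(F),\delta F\rangle_{L^2(0,T;\mathbb{L}^2)}
    = \int_\Omega |\delta y(\cdot,T;F)|^2\,dx + \int_{\Gamma} |\delta y_\Gamma(\cdot,T;F)|^2\,d\sigma,
\end{align*}
where $\delta Y=(\delta y,\delta y_\Gamma)$ solves the sensitivity system \eqref{problem:delta:y} with data $\delta F$. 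Indeed, pairing $J'(F+\delta F)-J'(F)=\Phi(\cdot,\cdot;F+\delta F)-\Phi(\cdot,\cdot;F)=\delta\Phi$ against $\delta F$ and integrating by parts exactly as in the passage from \eqref{delta:J:F:01} to \eqref{delta:J:F:02}, the cross terms cancel and one is left with the boundary/interior $L^2$-norms of $\delta Y(\cdot,T)$; alternatively this is precisely the second-order term already displayed in \eqref{delta:J:F:02}, since $\delta J(F) = \langle J'(F),\delta F\rangle + \tfrac12\langle J'(F+\delta F)-J'(F),\delta F\rangle$ by linearity of $F\mapsto J'(F)$ in the affine sense, i.e. $J$ is quadratic.

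Then I would invoke the hypothesis: the positivity condition guarantees that whenever $\delta F\neq 0$ in $L^2(0,T;\mathbb{L}^2)$ — equivalently, since $F\neq F+\delta F$ — the right-hand side is strictly positive (one must check the hypothesis is being applied at the relevant $F\in\mathcal{V}$ with the nonzero increment $\delta F$; the statement quantifies over $F\in\mathcal{V}$, and the increment enters through $\delta y=\delta y(\cdot,\cdot;F)$ determined by $\delta F$). Hence $\langle J'(F+\delta F)-J'(F),\delta F\rangle_{L^2(0,T;\mathbb{L}^2)}>0$ for all distinct $F,F+\delta F\in\mathcal{V}$, which is strict monotonicity of $J'$ on $\mathcal{V}$, hence strict convexity of $J$ on $\mathcal{V}$. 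A strictly convex functional on a convex set has at most one minimizer, and since any solution $F\in\mathcal{V}$ of \eqref{(A2-3)} satisfies $J(F)=0=\min J$, the problem {\bf (ISP)} has at most one solution in $\mathcal{V}$.

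The main obstacle, modest as it is, lies in the bookkeeping of the second step: one must justify carefully that the cross terms in $\langle\delta\Phi,\delta F\rangle$ vanish and that the quadratic remainder is exactly the sum of the two terminal $L^2$-norms — this is really a restatement of the computation \eqref{delta:J:F:01}–\eqref{delta:J:F:02}, using the linearity of the sensitivity map $\delta F\mapsto\delta Y$ and the fact that the adjoint data at time $T$ is $i\,\delta Y(\cdot,T;F)$ for the \emph{difference} of adjoint states. One should also be slightly careful that the phrase "at most one solution in $\mathcal{V}$" concerns solutions of the operator equation \eqref{(A2-3)} (exact data), not merely minimizers of $J$; but since the infimum of $J$ on $\mathcal{F}$ is attained and equals $0$ precisely at exact solutions, a strictly convex $J$ on $\mathcal{V}$ cannot have two such zeros.
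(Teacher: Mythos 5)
Your argument is correct and follows exactly the route the paper indicates (the paper offers only the one-line remark that strict convexity of $J$ is characterized by strict monotonicity of $J'$): you derive the monotonicity identity $\langle J'(F+\delta F)-J'(F),\delta F\rangle = \|\delta Y(\cdot,T)\|_{\mathbb{L}^2}^2$ from the quadratic structure already displayed in \eqref{delta:J:F:02}, and conclude strict convexity and hence uniqueness of zeros of $J$ on $\mathcal{V}$. Your parenthetical caveat about how the hypothesis quantifies over $F$ while the quantity actually depends on the increment $\delta F$ is a fair and accurate reading of the paper's (loosely stated) positivity condition.
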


\section{Numerical experiments for the 1D case}
\label{section:numerical:experiments}
In this section, we examine the numerical reconstruction of a spatial forcing term. Let $\ell>0$ and consider reconstructing a complex-valued function $f(x)$ in the 1-D Schr\"odinger equation with dynamic boundary conditions
\begin{align}\label{1deq1to4}
        \begin{cases}
            i y_t + y_{xx} =f(x) g(x,t)&\qquad(x,t)\in (0,\ell)\times (0,T),\\
            i y_t(0,t)+ y_x(0,t)=0 &\qquad t\in (0,T), \\
            i y_t(\ell,t)- y_x(\ell,t)=0 &\qquad t\in (0,T),\\
            (y,y(0,\cdot),y(\ell,\cdot))\rvert_{t=0}=(0,0,0)&\qquad x\in (0,\ell),
        \end{cases}
    \end{align}
where the function $g \in L^2\left(0,T; L^\infty(0,\ell)\right)$ is known. This special form of forcing terms is usually required to ensure the uniqueness of the inverse problem (see, e.g., \cite{AM12}).
\begin{remark}
Here we consider the reconstruction of one internal source term $f(x)$, because boundary source terms $f_\Gamma(x)$ are constant in this 1D case. For simplicity, we omit the reconstruction of constant functions.
\end{remark}
Let $Y(x,t;f):=\left(y(x,t),y(0,t),y(\ell,t)\right)$ be the solution to \eqref{1deq1to4}. Recall that the input-output operator $\Psi \colon L^2(0,\ell) \longrightarrow L^2(0,\ell)\times \mathbb{C}^2$ is defined by
$$(\Psi f)(x):=Y(x,T;f)=\left(y(x,T),y(0,T),y(\ell,T)\right), \qquad x\in (0,\ell),$$
and the Tikhonov functional is given by
\begin{align*}
J(f)&=\frac{1}{2} \left\|Y(\cdot,T;f)-U_T \right\|_{L^2(0,\ell)\times \mathbb{C}^2}^2, \qquad f\in L^2(0,\ell), \\
& \hspace{-0.3cm}= \frac{1}{2} \left(\left\|y(\cdot,T)-u_T\right\|_{L^2(0,\ell)}^2 + \left|y(0,T)-u_T^0\right|^2 + \left|y(\ell,T)-u_T^\ell\right|^2\right), \notag
\end{align*}
where $U_T:=\left(u_T, u_T^0, u_T^\ell\right) \in L^2(0,\ell)\times \mathbb{C}^2$.
The adjoint system associated with \eqref{1deq1to4} reads as follows
\begin{align}\label{1daeq1to4}
        \begin{cases}
            i \phi_t + \phi_{xx} =0&\qquad(x,t)\in (0,\ell)\times (0,T),\\
            i\phi_t(0,t)+ \phi_x(0,t)=0 &\qquad t\in (0,T), \\
            i\phi_t(\ell,t)- \phi_x(\ell,t)=0 &\qquad t\in (0,T),\\
            \phi(x,T)=i(y(x,T)-u_T(x)) & \qquad x\in (0,\ell),\\(\phi(0,T),\phi(\ell,T))=i\left(y(0,T)-u_T^0,y(\ell,T)-u_T^\ell\right)&\qquad x\in (0,\ell).
        \end{cases}
    \end{align}
In a similar manner to the computations in Section \ref{section:Frechet:differentiability}, the gradient of $J$ can be expressed as
\begin{equation}\label{1dj'}
J'(f)(x)=\int_0^T \phi(x,t;f) g(x,t) \,d t, \quad f\in L^2(0,\ell),\;  x\in (0,\ell).
\end{equation}
This gradient formula enables the implementation of gradient-type iterative methods. We shall use the Landweber iteration (Algorithm \ref{alg1}). We refer to \cite[\S 3.4.3]{HR'21} for more details.
\begin{algorithm}[H]
\caption{Landweber iteration}\label{alg1}
 Set $k=0$ and choose an initial guess $f_0$\;
 Solve the forward problem \eqref{1deq1to4} to obtain $Y(x,t;f_k)$\;
 Knowing the computed $Y(x,T;f_k)$ and the measured $U_T$, solve the adjoint system \eqref{1daeq1to4} to obtain $\phi(x,t;f_k)$\;
 Compute the descent direction $p_k=J'(f_k)$ using \eqref{1dj'}\;
 Solve the forward problem \eqref{1deq1to4} with source $f=p_k$ to get the solution $\Psi p_k$\;
 Compute the step parameter $\displaystyle \alpha_k =\frac{\|p_k\|_{L^2(0,\ell)}^2}{\|\Psi p_k\|_{L^2(0,\ell)\times \mathbb{C}^2}^2}$\;
 Compute the next iteration $f_{k+1}=f_k- \alpha_k p_k$\;
 Stop the algorithm if the stopping criterion $J(f_{k+1}) <e_J$ holds for a given tolerance $e_J$. Otherwise, set $k:=k+1$ and run Step 2\;
\end{algorithm}
The algorithm will be implemented using the \texttt{Wolfram} language to solve the forward and backward PDEs using the method of lines. We discretize the spatial interval $[0,\ell]$ into a uniform grid $(x_j)_{j=0}^{N_x}$ with step size $h=\frac{\ell}{N_x}$ using the finite difference method. Noting $y_j(t):=y(x_j,t)$, the second derivative is approximated by
$$y_{xx}(x_j,t) \approx \frac{y_{j-1}(t)-2 y_j(t) + y_{j+1}(t)}{h^2}, \qquad j=\overline{1, N_x-1}.$$
The first derivatives on the boundary are approximated by
\begin{align*}
    y_x(0,t) &\approx \frac{y_1(t)-y_0(t)}{h},\\
    y_x(\ell,t) &\approx \frac{y_{N_x}(t)-y_{N_x-1}(t)}{h}.
\end{align*}
Then, we solve the resulting system of ordinary differential equations numerically.

The noisy terminal data will be generated from the exact output data as follows
$$U_T(x) =Y_T(x) + p \times\|Y_T\|_{L^2(0,\ell)\times \mathbb{C}^2} \times \text{(random number)},$$
where $p$ represents the percentage of noise level.

In subsequent numerical experiments, we choose the following values
$$T=1, \quad \ell=1, \quad N_x=25, \quad g=1, \quad f_0=0.$$

\noindent\textbf{Example 1.} (Real source term)\\
We take the real source term $f(x)=\sin(\pi x), \; x\in (0,1)$. Next, we plot the corresponding solution to \eqref{1deq1to4} (Figure \ref{fig1}).

For the stopping parameter $e_J=10^{-6}$, the algorithm stops at iterations $k\in \{2,2,2\}$, for $p\in \{1\%,3\%,5\%\}$, respectively.
\begin{figure}[H]
    \centering
    {{\includegraphics[width=7.3cm]{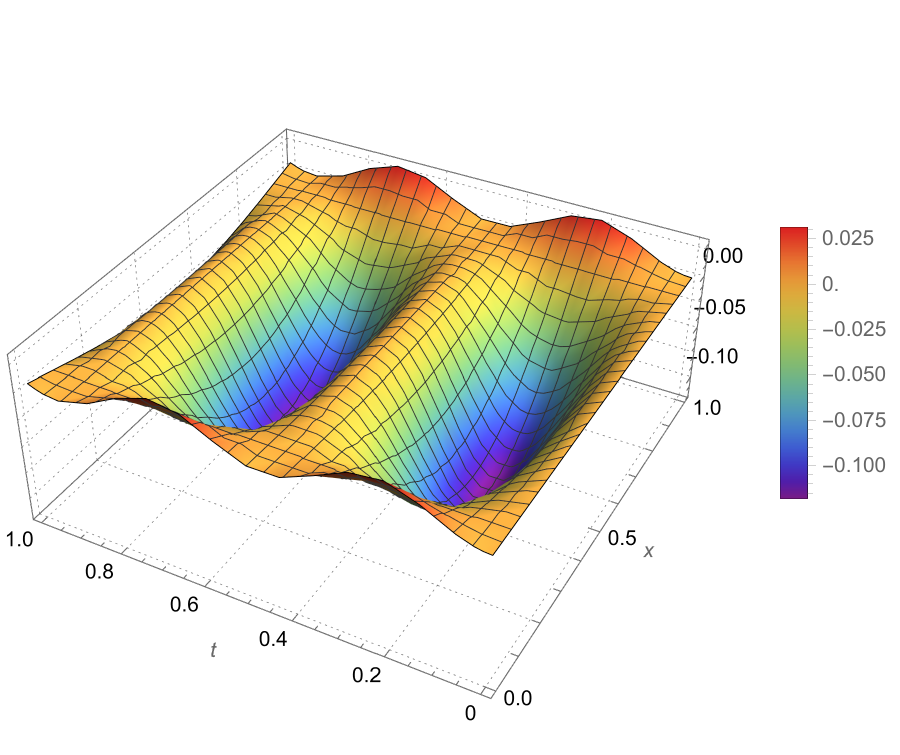} }}%
    \hspace{0.5cm}
    {{\includegraphics[width=7.3cm]{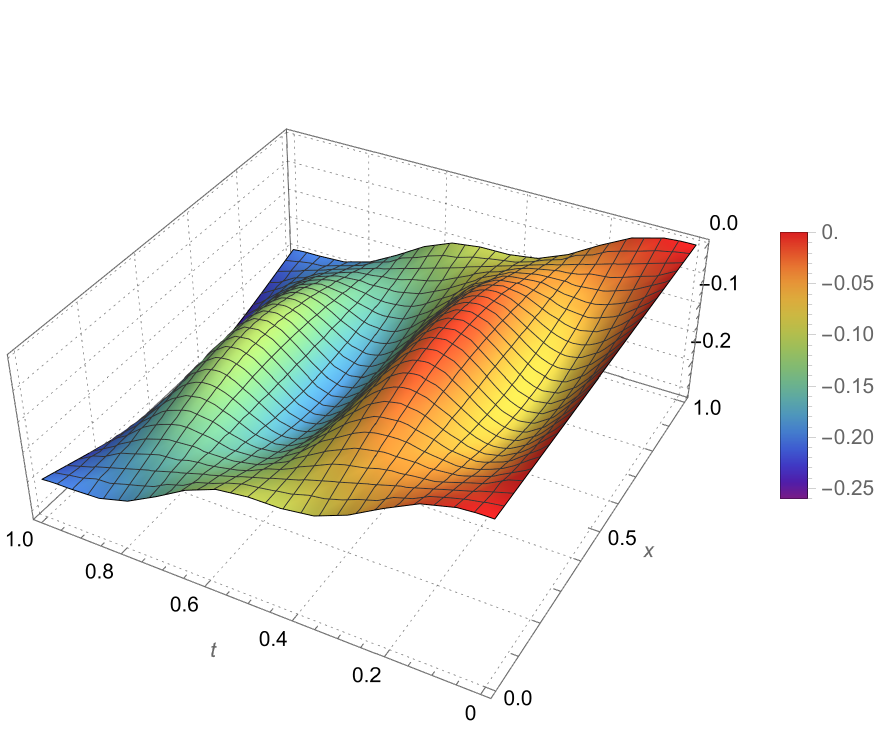} }}%
    \caption{Real part (left) and imaginary part (right) of the solution $Y(x,t;f)$ to \eqref{1deq1to4}.}%
    \label{fig1}%
\end{figure}

\begin{figure}[H]
    \centering
    {{\includegraphics[width=7.3cm]{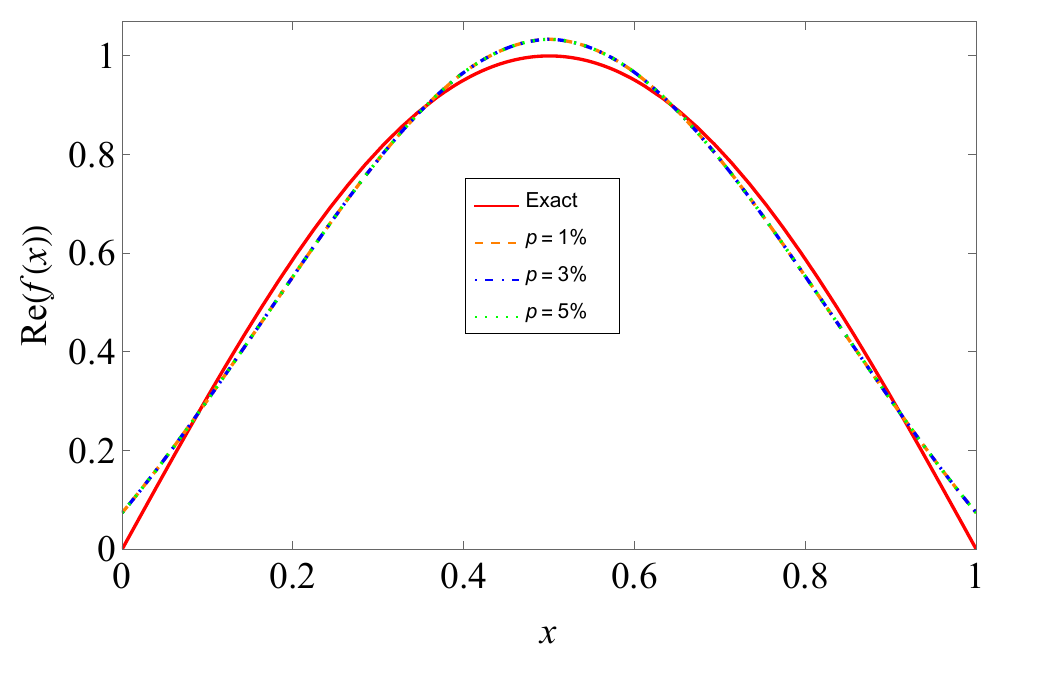} }}%
    \hspace{0.5cm}
    {{\includegraphics[width=7.3cm]{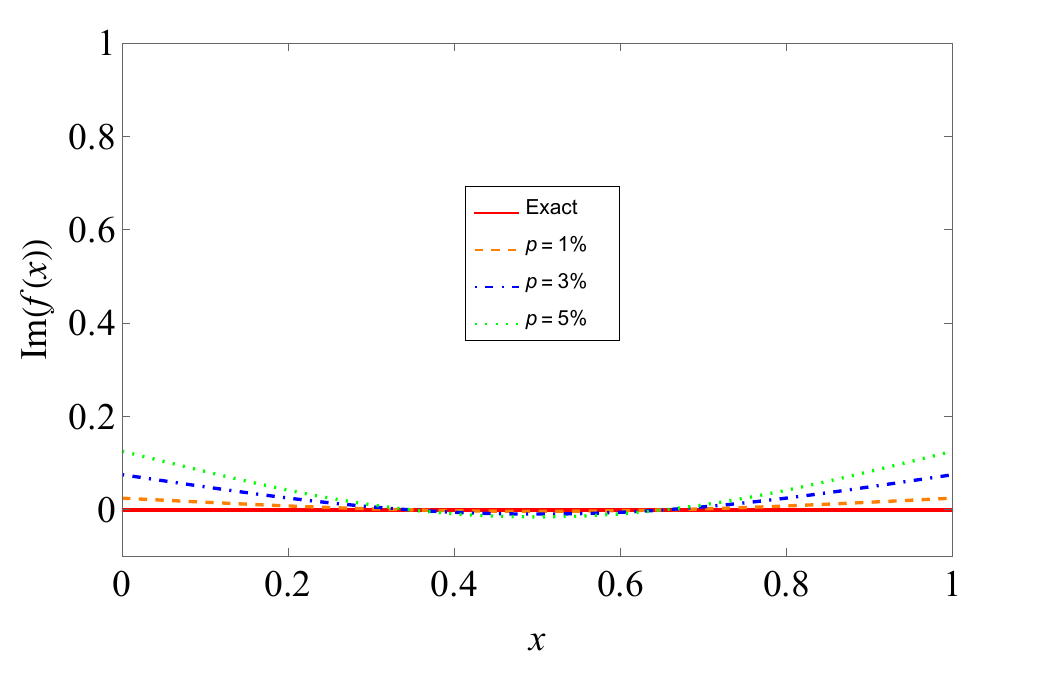} }}%
\caption{Exact and recovered $\Re f(x)$ (left) and $\Im f(x)$ (right) for $p\in \{1\%,3\%,5\%\}$.}\label{fig2}
\end{figure}
\bigskip

\noindent\textbf{Example 2.} (Imaginary source term)\\
We choose the imaginary source term $f(x)=ix (1-x), \; x\in (0,1)$. Next, we plot the corresponding solution to \eqref{1deq1to4} (Figure \ref{fig3}).

For the stopping parameter $e_J=10^{-8}$, the algorithm stops at iterations $k\in \{27,48,81\}$, for $p\in \{1\%,3\%,5\%\}$, respectively.
\begin{figure}[H]
    \centering
    {{\includegraphics[width=7.3cm]{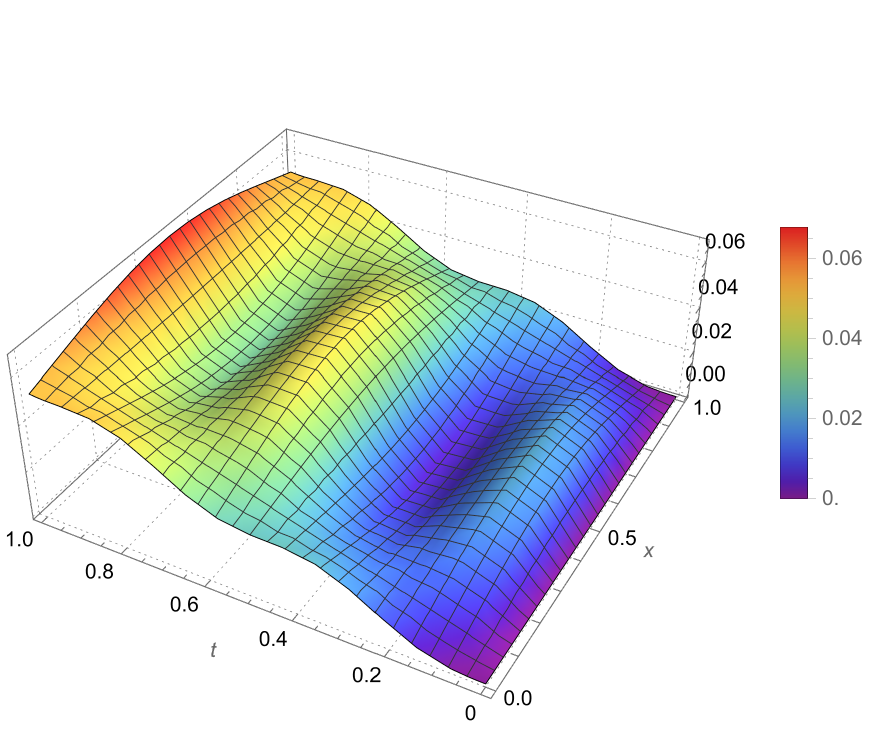} }}%
    \hspace{0.5cm}
    {{\includegraphics[width=7.3cm]{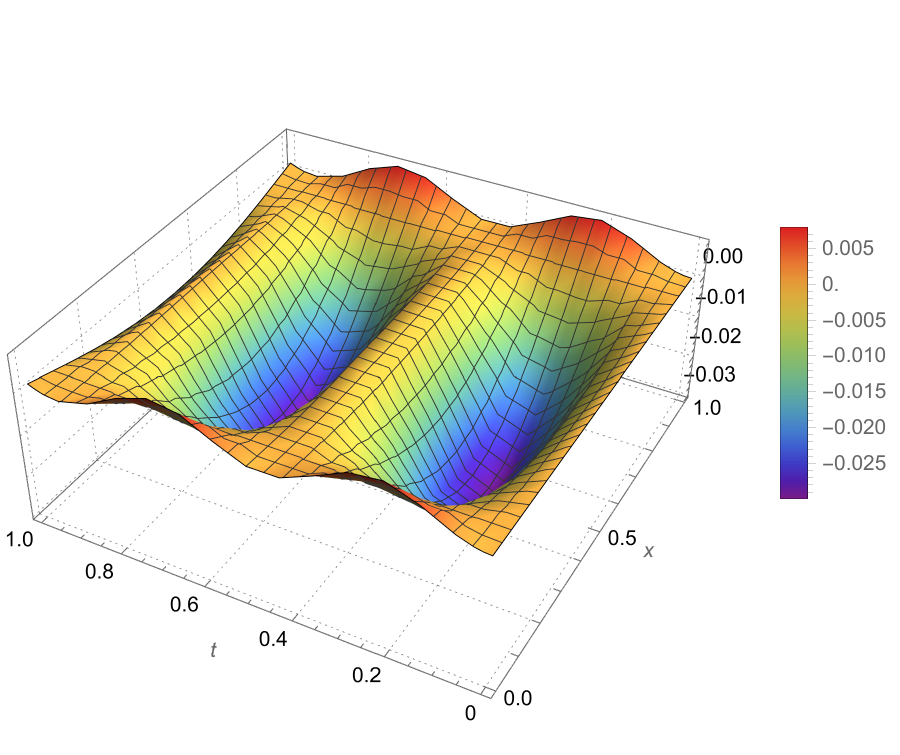} }}%
    \caption{Real part (left) and imaginary part (right) of the solution $Y(x,t;f)$ to \eqref{1deq1to4}.}%
    \label{fig3}%
\end{figure}

\begin{figure}[H]
    \centering
    {{\includegraphics[width=7.3cm]{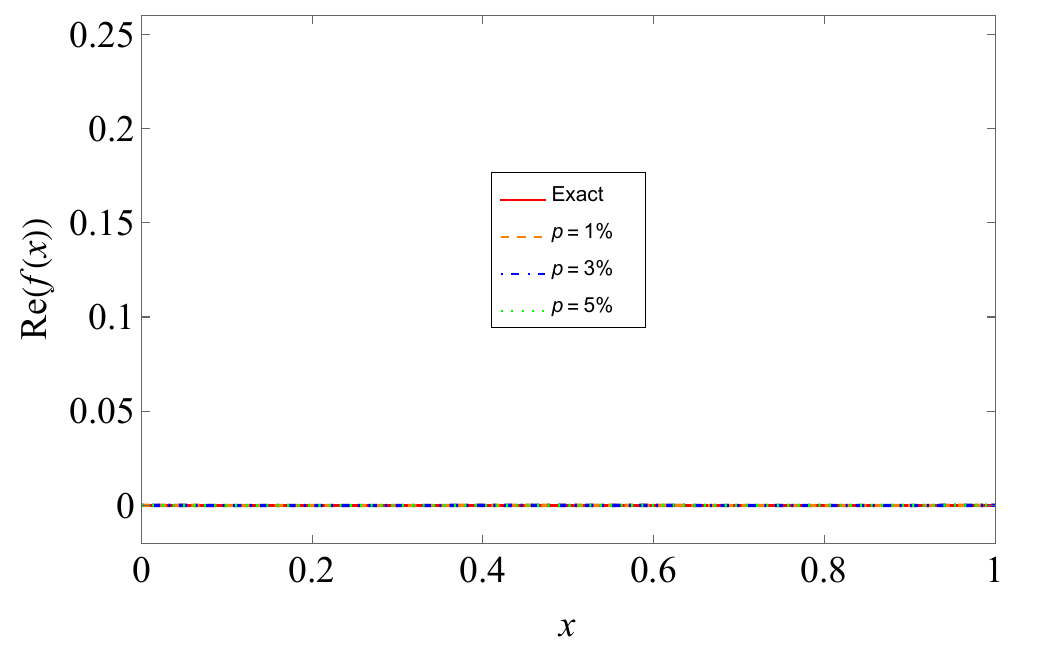} }}%
    \hspace{0.5cm}
    {{\includegraphics[width=7.3cm]{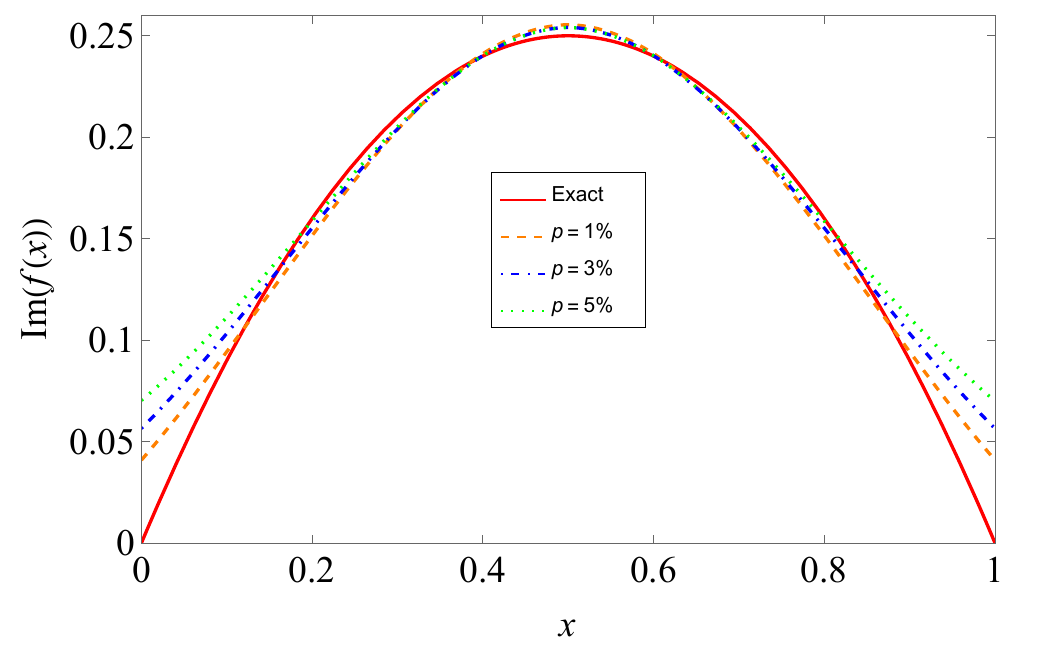} }}%
\caption{Exact and recovered $\Re f(x)$ (left) and $\Im f(x)$ (right) for $p\in \{1\%,3\%,5\%\}$.}\label{fig4}
\end{figure}
\bigskip

\noindent\textbf{Example 3.} (Complex source term)\\
We take the exact source term $f(x)=e^{i \pi x}, \; x\in (0,1)$. Next, we plot the corresponding solution to \eqref{1deq1to4} (Figure \ref{fig5}).

For the stopping parameter $e_J=10^{-8}$, the algorithm stops at iterations $k\in \{309,333,370\}$ for $p\in \{1\%,3\%,5\%\}$, respectively.
\begin{figure}[H]
    \centering
    {{\includegraphics[width=7.3cm]{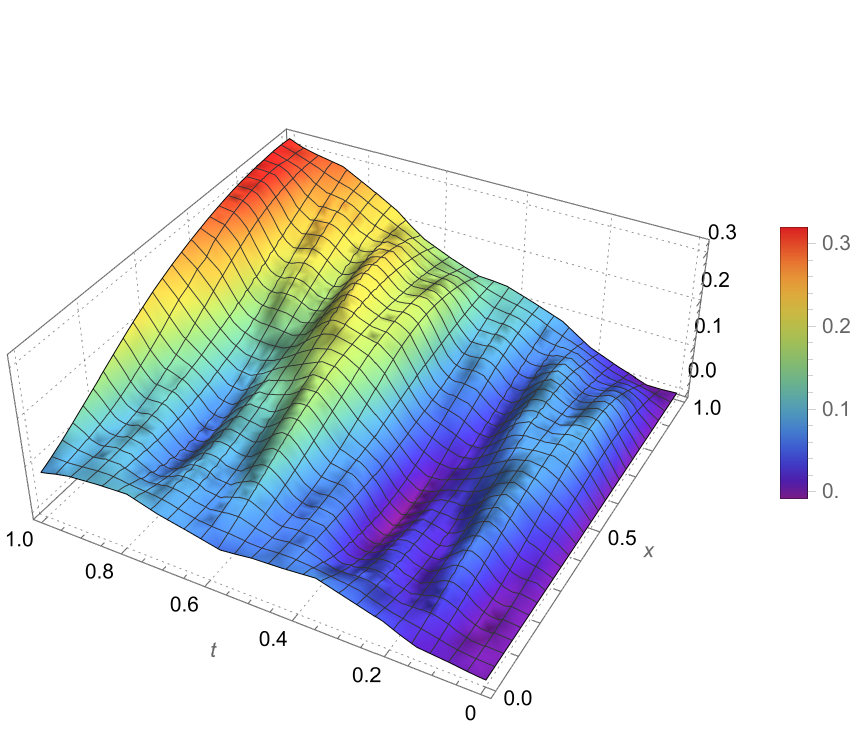} }}%
    \hspace{0.5cm}
    {{\includegraphics[width=7.3cm]{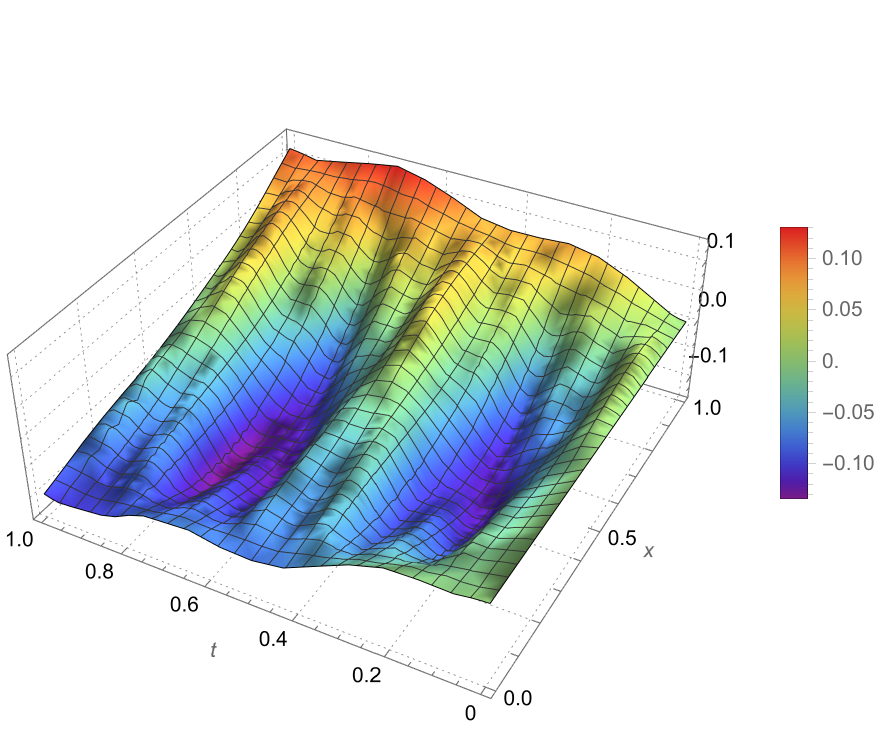} }}%
    \caption{Real part (left) and imaginary part (right) of the solution $Y(x,t;f)$ to \eqref{1deq1to4}.}%
    \label{fig5}%
\end{figure}

\begin{figure}[H]
    \centering
    {{\includegraphics[width=7.3cm]{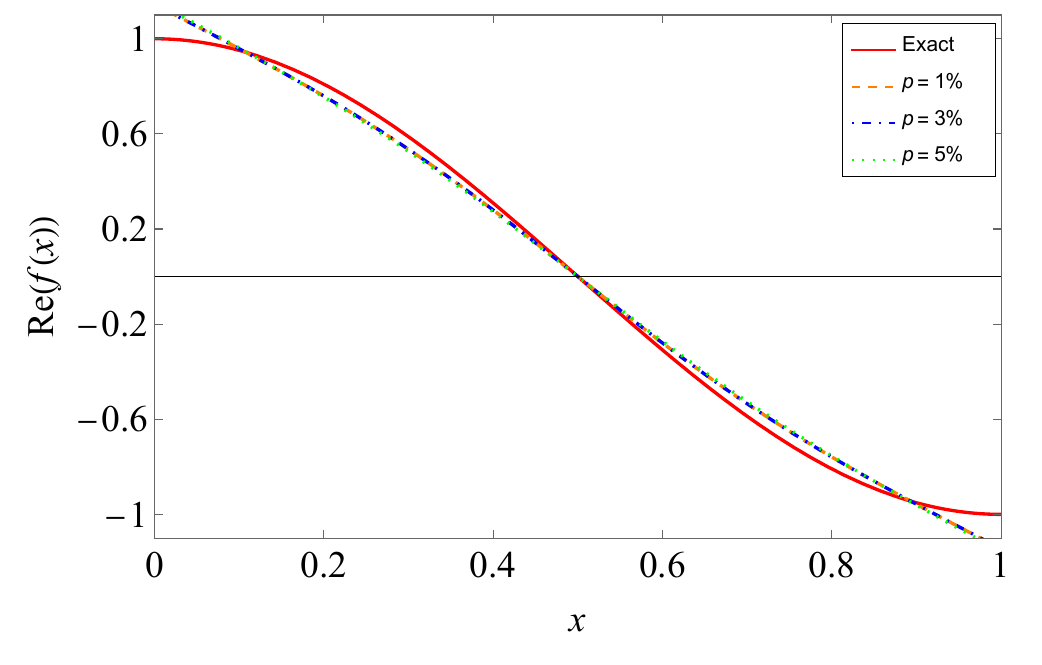} }}%
    \hspace{0.5cm}
    {{\includegraphics[width=7.3cm]{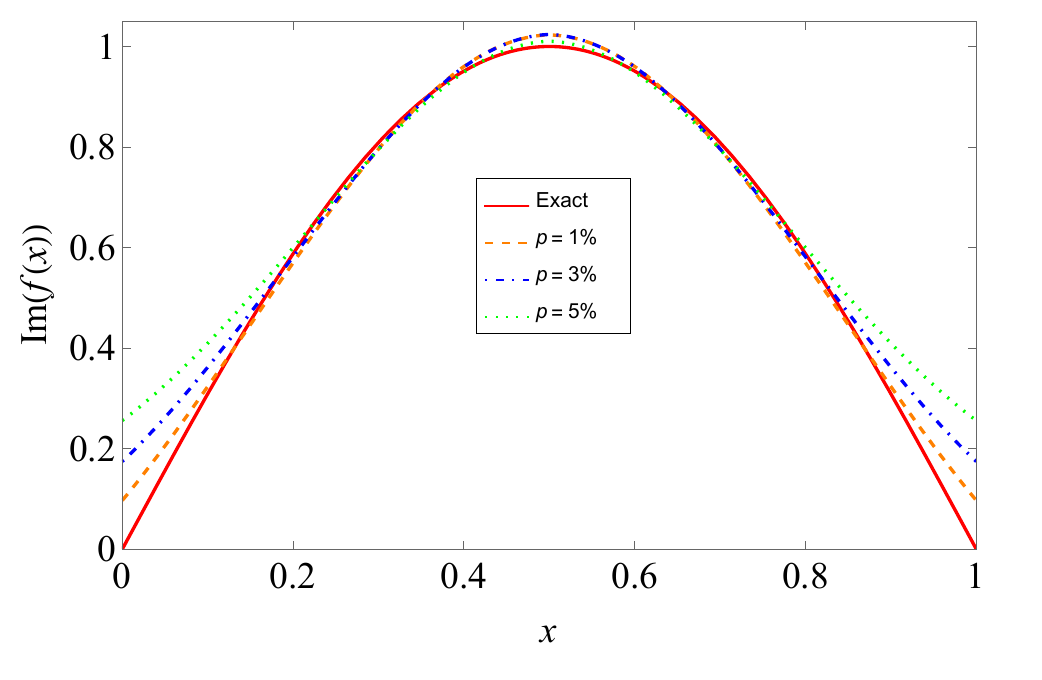} }}%
\caption{Exact and recovered $\Re f(x)$ (left) and $\Im f(x)$ (right) for $p\in \{1\%,3\%,5\%\}$.}\label{fig6}
\end{figure}

The numerical experiments presented in Figures \ref{fig2}, \ref{fig4}, and \ref{fig6}, demonstrate that the Landweber algorithm provides stable and accurate reconstruction for complex-valued functions. The algorithm captures both real and imaginary parts of the unknown source term. In addition, the recovery of the source function improves as the noise level $p$ decreases.

\section{Further comments and perspectives}
\label{section_further:comments}
We have investigated an inverse problem involving the determination of internal and boundary source terms in the Schr\"odinger equation with dynamic boundary conditions, using measured final time output data. We have analyzed a minimization problem for the associated Tikhonov functional and derived a gradient formula based on the solution of an appropriate adjoint system. Furthermore, we have established the Lipschitz continuity of the Fréchet gradient, which is crucial in gradient iterative methods.

The Landweber scheme we have considered provides stable and accurate results for reconstructing unknown source terms in the Schr\"odinger equation with dynamic boundary conditions. One could use other sophisticated gradient schemes such as conjugate gradient algorithms. Nevertheless, we chose the Landweber iteration as a basic scheme. Our approach can be extended to recover source terms in a multidimensional setting ($N=2$ or $3$). However, dynamic boundary conditions are more delicate to handle numerically in this case due to the geometry of the domain $\Omega$. This will be addressed in a forthcoming paper.

\appendix
\section{Appendix. Proof of Proposition \ref{proposition:wellposedness}}
\label{appendix:A}
In this appendix, we prove the well-posedness and regularity of solutions to problem \eqref{intro:eq:01}. To do this, we explore the existence of solutions via the semigroup approach together with some multiplier techniques applied for the Schr\"odinger equation with dynamic boundary conditions.
\subsection{Wentzell Laplacian}
We introduce the Wentzell-Laplacian operator $A_W:D(A_W)\subset \mathbb{L}^2 \to \mathbb{L}^2$ given by
\begin{align*}
    &A_W\left(
    \begin{array}{c}
    u\\u_\Gamma
    \end{array}
    \right)= \left(
    \begin{array}{c}
         -d\Delta u\\ d\pnu u-\gamma\Delta_\Gamma u_\Gamma
    \end{array}
    \right),\\
    &D(A_W)=\{(u,u_\Gamma)\in \mathbb{H}^1\,:\, (\Delta u, \gamma\Delta_\Gamma u_\Gamma-d\pnu u)\in \mathbb{L}^2\}=\mathbb{H}^2,
\end{align*}
where the last equality is obtained by the elliptic regularity.

Next, we summarize some properties of $A_W$ in the following proposition.
\begin{proposition}[See \cite{MMS'17}] The operator $-A_W$ is densely defined, self-adjoint, non-positive and generates an analytic $C_0$-semigroup $(e^{-tA_W})_{t\geq 0}$ on $\mathbb{L}^2$. Moreover, $D(A_W^{1/2})\equiv \mathbb{H}^1$.
\end{proposition}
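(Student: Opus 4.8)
The plan is to realize $A_W$ as the operator associated, via the first representation theorem of Kato (equivalently the Lions construction), with a symmetric non-negative sesquilinear form, so that self-adjointness, non-negativity, generation of an analytic semigroup, and the form-domain identity all follow simultaneously. Throughout I work with the Hermitian inner product $\langle (y,y_\Gamma),(z,z_\Gamma)\rangle_{\mathbb{L}^2}=\int_\Omega y\overline z\,dx+\int_\Gamma y_\Gamma\overline{z_\Gamma}\,d\sigma$ whose real part is the inner product of the paper; since $A_W$ is complex-linear (it commutes with multiplication by $i$), every conclusion transfers to the real structure actually used.

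First I would introduce the form $\mathfrak{a}\colon\mathbb{H}^1\times\mathbb{H}^1\to\mathbb{C}$,
$$\mathfrak{a}((u,u_\Gamma),(v,v_\Gamma)) = d\int_\Omega \nabla u\cdot\overline{\nabla v}\,dx + \gamma\int_\Gamma \nabla_\Gamma u_\Gamma\cdot\overline{\nabla_\Gamma v_\Gamma}\,d\sigma,$$
with form domain $V:=\mathbb{H}^1$. The motivation is Green's formula: for $(u,u_\Gamma)\in\mathbb{H}^2$ and $(v,v_\Gamma)\in\mathbb{H}^1$, integrating by parts in $\Omega$ produces the boundary term $-d\int_\Gamma\pnu u\,\overline{v_\Gamma}\,d\sigma$, whereas the Laplace--Beltrami term on the closed manifold $\Gamma$ yields no boundary contribution; the normal-derivative term is then cancelled exactly by the $d\pnu u$ sitting in the second component of $A_W$. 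This gives the identity $\langle A_W(u,u_\Gamma),(v,v_\Gamma)\rangle_{\mathbb{L}^2}=\mathfrak{a}((u,u_\Gamma),(v,v_\Gamma))$, the structural feature of the Wentzell coupling that makes $\mathfrak{a}$ symmetric and diagonal in the energy.

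Next I would check the hypotheses of the representation theorem. The form is densely defined since $V=\mathbb{H}^1$ is dense in $\mathbb{L}^2$; it is symmetric and bounded on $V$ because its defining integrals are; and it is non-negative, since $\mathfrak{a}((u,u_\Gamma),(u,u_\Gamma))=d\|\nabla u\|_{L^2(\Omega)}^2+\gamma\|\nabla_\Gamma u_\Gamma\|_{L^2(\Gamma)}^2\ge 0$. The quantitative point is closedness: the form norm $(\mathfrak{a}((u,u_\Gamma),(u,u_\Gamma))+\|(u,u_\Gamma)\|_{\mathbb{L}^2}^2)^{1/2}$ is equivalent to the $\mathbb{H}^1$-norm, so $V$ is complete for it. The representation theorem then furnishes a unique self-adjoint operator $A\ge 0$; a non-negative self-adjoint operator is sectorial of angle $\pi/2$, so $-A$ generates an analytic $C_0$-semigroup on $\mathbb{L}^2$. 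The same theorem gives $D(A^{1/2})=V=\mathbb{H}^1$ together with $\mathfrak{a}(u,v)=\langle A^{1/2}u,A^{1/2}v\rangle$, which is exactly the claimed form-domain identity.

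It remains to identify $A$ with $A_W$, and this is where the main work lies. The inclusion $A_W\subseteq A$ is immediate from the Green-formula identity. For the converse, take $(u,u_\Gamma)\in D(A)$ with $A(u,u_\Gamma)=(w,w_\Gamma)\in\mathbb{L}^2$, so $\mathfrak{a}((u,u_\Gamma),(v,v_\Gamma))=\langle(w,w_\Gamma),(v,v_\Gamma)\rangle_{\mathbb{L}^2}$ for all $(v,v_\Gamma)\in\mathbb{H}^1$. Testing first against $(v,0)$ with $v\in C_c^\infty(\Omega)$ (admissible since $v|_\Gamma=0$) forces $-d\Delta u=w$ distributionally, so $\Delta u\in L^2(\Omega)$ and the weak normal trace $\pnu u$ is well defined in $H^{-1/2}(\Gamma)$. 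Re-inserting this into the weak identity with general $(v,v_\Gamma)\in\mathbb{H}^1$ and integrating by parts isolates on the boundary the combination $\gamma\Delta_\Gamma u_\Gamma-d\pnu u=-w_\Gamma$, so that this specific Wentzell combination lies in $L^2(\Gamma)$. At this stage the coupled elliptic regularity recorded in the definition of $D(A_W)$, namely $\{(u,u_\Gamma)\in\mathbb{H}^1:(\Delta u,\gamma\Delta_\Gamma u_\Gamma-d\pnu u)\in\mathbb{L}^2\}=\mathbb{H}^2$, applies and yields $(u,u_\Gamma)\in\mathbb{H}^2=D(A_W)$, whence $A=A_W$. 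The delicate point is precisely this regularity step: the coupling prevents reading off a separate boundary equation for $u_\Gamma$ and a boundary condition for $u$, since only the combination $\gamma\Delta_\Gamma u_\Gamma-d\pnu u$ is controlled in $L^2(\Gamma)$, so the passage to $\mathbb{H}^2$ genuinely requires the coupled regularity theorem rather than successive scalar estimates; the equivalence of the form norm with the $\mathbb{H}^1$-norm, underpinning closedness, is the other point needing care.
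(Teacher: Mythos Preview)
The paper does not supply its own proof of this proposition; it simply records the result and refers the reader to \cite{MMS'17}. Your argument via Kato's first representation theorem for the symmetric, non-negative, closed form $\mathfrak{a}$ on $\mathbb{H}^1$ is correct and is in fact the standard route (and the one taken in the cited reference): the cancellation of the normal-derivative boundary term with the $d\partial_\nu u$ in the second component of $A_W$ is exactly the mechanism that makes the Wentzell Laplacian the operator associated with this diagonal energy form, and your identification $A=A_W$ via interior testing, the $H^{-1/2}$ normal trace, and the coupled elliptic regularity $D(A_W)=\mathbb{H}^2$ is carried out accurately. The only points one might flag for completeness are the density of $\mathbb{H}^1$ in $\mathbb{L}^2$ (true, but not entirely tautological since elements of $\mathbb{H}^1$ have coupled components) and the passage between the Hermitian and the real inner product, both of which you acknowledge and handle adequately.
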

Now, for $(\vec{r},\vec{r}_\Gamma)\in [\mathbb{W}^{1,\infty}]^N$ real-valued, let us consider the operator $A:D(A)\subset \mathbb{L}^2 \to \mathbb{L}^2$ defined by
\begin{align*}
    A\left(
    \begin{array}{c}
        u\\ u_\Gamma
    \end{array}
    \right)=\left(
    \begin{array}{c}
        di\Delta u -\vec{r}\cdot \nabla u\\
        -di\pnu u +\gamma i \Delta_\Gamma u_\Gamma -\vec{r}_\Gamma \cdot \nabla_\Gamma u_\Gamma
    \end{array}
    \right),\quad D(A)=D(A_W)=\mathbb{H}^2.
\end{align*}

\begin{proposition}
Suppose that $\vec{r} \cdot \nu \le 0$ on $\Gamma$. There exists a constant $\kappa \in \mathbb{R}$ which depends on $d,\gamma>0$, $\vec{r}$ and $\vec{r}_\Gamma$ such that the scaled operator $A-\kappa I_{\mathbb{H}^1}$ is maximal dissipative on $D(A_W^{1/2})$. Thus, $A$ generates a $C_0$-semigroup $(e^{tA})_{t\geq 0}$ on $\mathbb{H}^1$ satisfying
    \begin{align*}
        \|e^{tA}\|_{\mathbb{H}^1}\leq e^{\kappa t},\quad t\geq 0.
    \end{align*}
\end{proposition}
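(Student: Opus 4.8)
The plan is to realize the operator $A$ as a bounded perturbation of the skew-adjoint operator $iA_W$ on the energy space $\mathbb{H}^1 = D(A_W^{1/2})$ and then invoke a Lumer--Phillips type argument. First I would observe that, formally, $A = iA_W + B$, where $B$ is the first-order drift operator $B(u,u_\Gamma) = (-\vec r\cdot\nabla u,\, -\vec r_\Gamma\cdot\nabla_\Gamma u_\Gamma)$. Since $(\vec r,\vec r_\Gamma)\in[\mathbb{W}^{1,\infty}]^N$, the operator $B$ maps $\mathbb{H}^1$ boundedly into $\mathbb{L}^2$; however, to obtain a semigroup on $\mathbb{H}^1$ (not merely on $\mathbb{L}^2$) one must work with the graph norm of $A_W^{1/2}$, i.e. the $\mathbb{H}^1$-norm, and check dissipativity of $A - \kappa I$ directly in that inner product.

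The core computation is the energy estimate. Taking $(u,u_\Gamma)\in D(A) = \mathbb{H}^2$, I would compute $\langle A(u,u_\Gamma),(u,u_\Gamma)\rangle_{\mathbb{H}^1}$, or more precisely the quadratic form associated with $A_W^{1/2}$, namely $\Re\langle A_W^{1/2} A(u,u_\Gamma),\, A_W^{1/2}(u,u_\Gamma)\rangle_{\mathbb{L}^2}$ — equivalently one controls $\Re\langle A(u,u_\Gamma),(u,u_\Gamma)\rangle$ together with $\Re\langle A_W A(u,u_\Gamma),(u,u_\Gamma)\rangle$. The purely second-order part $iA_W$ is skew-adjoint and contributes nothing to the real part. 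The drift terms, after integration by parts over $\Omega$ using Green's formula, produce: (i) interior terms involving $\mathrm{div}(\vec r)$ and $\nabla\vec r$, which are bounded by $\|\vec r\|_{\mathbb{W}^{1,\infty}}\|(u,u_\Gamma)\|_{\mathbb{H}^1}^2$; (ii) boundary terms on $\Gamma$ of the form $\tfrac12\int_\Gamma (\vec r\cdot\nu)\,|\nabla u|^2\,d\sigma$ or $\tfrac12\int_\Gamma(\vec r\cdot\nu)|u|^2$, whose sign is controlled precisely by the hypothesis $\vec r\cdot\nu\le 0$ on $\Gamma$ — this is where that assumption is used, to discard an otherwise uncontrollable boundary term; and (iii) tangential terms from $\vec r_\Gamma\cdot\nabla_\Gamma u_\Gamma$ handled analogously on the manifold $\Gamma$ using that $\Gamma$ is closed (no boundary), so integration by parts there generates only zeroth-order terms $\mathrm{div}_\Gamma(\vec r_\Gamma)$. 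Collecting everything yields $\Re\langle A(u,u_\Gamma),(u,u_\Gamma)\rangle_{\mathbb{H}^1}\le \kappa\,\|(u,u_\Gamma)\|_{\mathbb{H}^1}^2$ for a suitable $\kappa = \kappa(d,\gamma,\vec r,\vec r_\Gamma)$, i.e. $A - \kappa I$ is dissipative.

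Next I would establish the range condition: $\lambda I - (A-\kappa I)$ is surjective onto $\mathbb{H}^1$ for some (hence all large) $\lambda > 0$. Since $iA_W$ generates the unitary group $(e^{itA_W})_{t}$ on $\mathbb{L}^2$ and, by the bounded-perturbation theorem together with the fact that $B\colon\mathbb{H}^1\to\mathbb{L}^2$ is bounded, the resolvent $(\lambda - A)^{-1}$ exists as a bounded operator; one checks it actually maps into $\mathbb{H}^2 = D(A)$ and that the graph norm equivalence makes $\lambda - (A-\kappa I)$ bijective on the energy space. Equivalently, one can solve the stationary problem $\lambda(u,u_\Gamma) - A(u,u_\Gamma) = (g,g_\Gamma)$ by a Lax--Milgram argument for the associated sesquilinear form on $\mathbb{H}^1$, using the dissipativity estimate from the previous step for coercivity after shifting by $\kappa$. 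With dissipativity and maximality in hand, the Lumer--Phillips theorem gives that $A - \kappa I_{\mathbb{H}^1}$ generates a $C_0$-semigroup of contractions on $D(A_W^{1/2})=\mathbb{H}^1$; rescaling back produces $(e^{tA})_{t\ge0}$ with $\|e^{tA}\|_{\mathbb{H}^1}\le e^{\kappa t}$.

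I expect the main obstacle to be the bookkeeping of the boundary and interface terms in the energy identity: one must carefully track the coupling condition $u|_\Gamma = u_\Gamma$ and the way the Wentzell structure intertwines the normal derivative $\pnu u$ with $\Delta_\Gamma u_\Gamma$, ensuring that the potentially dangerous terms either cancel by skew-adjointness of $iA_W$ or are absorbed thanks to $\vec r\cdot\nu\le 0$. A secondary technical point is justifying the integrations by parts at the level of $\mathbb{H}^2$-regularity and the identification $D(A^{1/2}_W)=\mathbb{H}^1$, both of which may be quoted from the cited reference \cite{MMS'17}.
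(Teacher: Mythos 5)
Your proposal follows essentially the same route as the paper: dissipativity of $A-\kappa I$ is verified directly in the $D(A_W^{1/2})$ inner product by integrating the drift terms by parts, the hypothesis $\vec r\cdot\nu\le 0$ is invoked to discard the one boundary term not absorbable into $\kappa\|U\|_{\mathbb{H}^1}^2$ (which is specifically $\tfrac{d}{2}\int_\Gamma(\vec r\cdot\nu)|\pnu u|^2\,d\sigma$ rather than $\int_\Gamma(\vec r\cdot\nu)|u|^2\,d\sigma$), and maximality is obtained by solving the stationary elliptic system before applying Lumer--Phillips. The only caveat is your aside on the bounded-perturbation theorem: the drift $B$ is bounded only from $\mathbb{H}^1$ to $\mathbb{L}^2$, hence is a bounded operator on neither space, so that shortcut fails and one must use the Lax--Milgram/elliptic-regularity argument you also propose, which is exactly what the paper does.
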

\begin{proof}
For each $U:=(u,u_\Gamma) \in D(A_W^{1/2})\equiv \mathbb{H}^1$, we have $$\langle AU,U \rangle_{D(A_W^{1/2})}=\langle AU,A_W U \rangle_{\mathbb{L}^2}+ \langle AU,U \rangle_{\mathbb{L}^2},$$ with
    \begin{align*}
        \langle AU,A_W U \rangle_{\mathbb{L}^2}
=& d\Re \int_\Omega (\vec{r}\cdot \nabla u)\Delta \overline{u}\,dx -d\Re \int_\Gamma (\vec{r}_\Gamma \cdot \nabla_\Gamma u_\Gamma)\pnu \overline{u}\,d\sigma \\
        &+\gamma\Re \int_{\Gamma} (\vec{r}_\Gamma \cdot \nabla_\Gamma u_\Gamma)\Delta_\Gamma \overline{u}_\Gamma d\sigma\\
        =&-d\Re \int_\Omega \left( \vec{\nabla}\vec{r}(\nabla u,\nabla \overline{u}) +\dfrac{1}{2}\vec{r} \cdot \nabla (|\nabla u|^2)\right)\,dx \\
        &\hspace{-0.7cm} -\gamma \Re \int_{\Gamma} \left(\vec{\nabla}_\Gamma \vec{r}_\Gamma (\nabla_\Gamma u_\Gamma ,\nabla_\Gamma u_\Gamma)+\frac{1}{2} \vec{r}_\Gamma \cdot \nabla_\Gamma (|\nabla_\Gamma u_\Gamma|^2) \right)\,d \sigma + d\int_\Gamma (\vec{r}\cdot \nu) |\partial_\nu u|^2\,d\sigma\\
        =& -d\Re \int_\Omega \vec{\nabla}\vec{r}(\nabla u,\nabla \overline{u})\,dx +\frac{d}{2} \int_\Omega \mathrm{div}(\vec{r}) |\nabla u|^2\, dx +\dfrac{d}{2} \int_\Gamma (\vec{r}\cdot \nu) |\partial_\nu u|^2\,d\sigma \\
        &\hspace{-1.3cm}- \frac{d}{2}\int_\Gamma (\vec{r}\cdot \nu) |\nabla_\Gamma u_\Gamma|^2\,d\sigma -\gamma\Re \int_\Gamma \vec{\nabla}_\Gamma \vec{r}_\Gamma (\nabla_\Gamma u_\Gamma ,\nabla_\Gamma \overline{u}_\Gamma)\,d \sigma +\frac{\gamma}{2}\int_\Gamma \mathrm{div}_\Gamma (\vec{r}_\Gamma) |\nabla_\Gamma u_\Gamma|^2\,d\sigma,
    \end{align*}
where we denote by $\vec{\nabla}\vec{r}$ and $\vec{\nabla}_\Gamma\vec{r}_\Gamma$ the Jacobian matrices in $\Omega$ and on $\Gamma$, respectively. We have used integration by parts in $\Omega$ and the surface divergence formula on $\Gamma$. The second term $\langle AU,U \rangle_{\mathbb{L}^2}$ can be computed similarly. Since $(\vec{r},\vec{r}_\Gamma)\in \mathbb{W}^{1,\infty}$, we can find $\kappa \in \mathbb{R}$ such that
    \begin{align*}
        \langle AU-\kappa U,U \rangle_{D(A_W^{1/2})}\leq 0\qquad \forall\,  U=\left(
        \begin{array}{c}
            u\\ u_\Gamma
        \end{array}
        \right)\in D(A_W^{1/2}).
    \end{align*}
On the other hand, to prove the maximal dissipativity of $A-\kappa I_{\mathbb{H}^1}$, we consider $\lambda>0$ and $G=(g,g_\Gamma)\in D(A_W^{1/2})$. Then, we need to prove the existence of a unique $F=(f,f_\Gamma)\in D(A)$ such that
    \begin{align}
        \label{eq:abstract:maximal:diss}
        [\lambda I -(A-\kappa I)]\left(
        \begin{array}{c}
            f\\ f_\Gamma
        \end{array}
        \right)=\left(
        \begin{array}{c}
            g\\g_\Gamma
        \end{array}
        \right).
    \end{align}
The equation \eqref{eq:abstract:maximal:diss} reads as follows:
    \begin{align}
        \begin{cases}
            -di\Delta f +\vec{r}\cdot \nabla f + (\lambda +\kappa)f=g &\text{ in }\Omega,\\
            di\pnu f -\gamma i \Delta_\Gamma f_\Gamma +\vec{r}_\Gamma \cdot \nabla f_\Gamma + (\lambda + \kappa)f_\Gamma =g_\Gamma &\text{ on }\Gamma,\\
            f\big|_\Gamma =f_\Gamma&\text{ on }\Gamma.
        \end{cases}
    \end{align}
By the elliptic regularity results for $\Delta$ and $\Delta_\Gamma$ (see \cite{MIR05}), we can ensure the existence of a unique solution $(f,f_\Gamma)$ with the desired properties. This provides the maximal dissipativity of $A-\kappa I_{\mathbb{H}^1}$.
\end{proof}


\subsection{Well-posedness}
Now, we focus on the non-homogeneous Cauchy problem
\begin{align}
    \label{non-homogeneous:prob:appendix}
    \begin{cases}
        \pt u-di\Delta u+\vec{\rho}\cdot \nabla u+\alpha u=g&\text{ in }Q,\\
        \pt u_\Gamma +di\pnu u -\gamma i \Delta_\Gamma u_\Gamma +\vec{\rho}_\Gamma \cdot \nabla_\Gamma u_\Gamma +\alpha_\Gamma u_\Gamma =g_\Gamma &\text{ on }\Sigma,\\
        u\big|_{\Gamma}=u_\Gamma &\text{ on }\Sigma ,\\
        (u(\cdot,0),u_\Gamma(\cdot,0))=(u_0,u_{\Gamma,0})&\text{ in }\Omega\times \Gamma.
    \end{cases}
\end{align}

\begin{proposition}
    Suppose that $(u_0,u_{\Gamma,0})\in \mathbb{L}^2$, $(\vec{\rho},\vec{\rho}_\Gamma)\in \mathbb{W}^{1,\infty}$ is real-valued, $(\alpha,\alpha_\Gamma)\in \mathbb{L}^\infty$ and $(g,g_\Gamma)\in L^1(0,T;\mathbb{L}^2)$. Then, there exists a constant $C>0$ such that the weak solution $(u,u_\Gamma)\in C^0([0,T];\mathbb{L}^2)$ of \eqref{non-homogeneous:prob:appendix} satisfies
    \begin{align}
        \label{prop:L2:energy:ineq}
        \|(u,u_\Gamma)\|_{C^0([0,T];\mathbb{L}^2)}\leq C\left(\|(u_0,u_{\Gamma,0})\|_{\mathbb{L}^2} +  \|(g,g_\Gamma)\|_{L^1(0,T;\mathbb{L}^2)} \right).
    \end{align}
\end{proposition}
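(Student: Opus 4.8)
The plan is to establish \eqref{prop:L2:energy:ineq} via the standard semigroup-plus-a-priori-estimate scheme, using the generation result for the operator $A$ proved in the previous subsection together with a direct energy computation for the $\mathbb{L}^2$-norm. First I would write \eqref{non-homogeneous:prob:appendix} in abstract form. Observe that the principal part of \eqref{non-homogeneous:prob:appendix} is $-A$ (with the roles of $\vec r,\vec r_\Gamma$ played by $\vec\rho,\vec\rho_\Gamma$), so the equation becomes $u' = A U - B U + G$, where $B U := (\alpha u,\alpha_\Gamma u_\Gamma)$ is a bounded perturbation on $\mathbb{L}^2$ (since $(\alpha,\alpha_\Gamma)\in\mathbb{L}^\infty$). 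By the previous proposition, $A$ generates a $C_0$-semigroup on $\mathbb{H}^1$; the point needed here, however, is generation on $\mathbb{L}^2$. I would argue that $A$ (more precisely $A-\kappa I$) is also dissipative on $\mathbb{L}^2$ — indeed, the computation of $\langle AU,U\rangle_{\mathbb{L}^2}$ is exactly the ``second term'' referred to in the previous proof, and integration by parts gives $\Re\langle AU,U\rangle_{\mathbb{L}^2} = -\tfrac12\int_\Omega \mathrm{div}(\vec\rho)|u|^2 + \tfrac12\int_\Gamma(\vec\rho\cdot\nu)|u_\Gamma|^2 - \tfrac12\int_\Gamma\mathrm{div}_\Gamma(\vec\rho_\Gamma)|u_\Gamma|^2$ (the $di\Delta$ and $\gamma i\Delta_\Gamma$ terms being purely imaginary), so $\Re\langle (A-\kappa I)U,U\rangle_{\mathbb{L}^2}\le 0$ for $\kappa$ large enough, and maximal dissipativity follows as before from elliptic regularity. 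Hence $A-(\kappa+\|B\|)I$ generates a contraction semigroup on $\mathbb{L}^2$, and in particular $\|e^{t(A-B)}\|_{\mathcal L(\mathbb{L}^2)}\le e^{ct}$ for some $c\in\mathbb{R}$.

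Next I would invoke the standard theory of mild/weak solutions of inhomogeneous Cauchy problems: since $A-B$ generates a $C_0$-semigroup $(S(t))_{t\ge0}$ on $\mathbb{L}^2$ and $(g,g_\Gamma)\in L^1(0,T;\mathbb{L}^2)$, there is a unique mild solution
\begin{align*}
    U(t) = S(t)(u_0,u_{\Gamma,0}) + \int_0^t S(t-s)(g(s),g_\Gamma(s))\,ds \in C^0([0,T];\mathbb{L}^2),
\end{align*}
which coincides with the weak solution. Taking norms and using $\|S(\tau)\|\le e^{c\tau}\le e^{|c|T}$ for $0\le\tau\le T$ gives, for every $t\in[0,T]$,
\begin{align*}
    \|U(t)\|_{\mathbb{L}^2} \le e^{|c|T}\|(u_0,u_{\Gamma,0})\|_{\mathbb{L}^2} + e^{|c|T}\int_0^t \|(g(s),g_\Gamma(s))\|_{\mathbb{L}^2}\,ds,
\end{align*}
and taking the supremum over $t\in[0,T]$ yields \eqref{prop:L2:energy:ineq} with $C=e^{|c|T}$. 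Alternatively, and perhaps cleaner for a self-contained appendix, one can bypass the semigroup bound at the level of the estimate and instead prove \eqref{prop:L2:energy:ineq} directly: multiply the first equation by $\overline u$, integrate over $\Omega$, multiply the second by $\overline{u_\Gamma}$, integrate over $\Gamma$, add, and take real parts; the dynamic boundary term $d\,i\,\pnu u$ cancels with the $-d\,i\,\pnu u$ produced by Green's formula on the bulk term, the $\Delta$ and $\Delta_\Gamma$ contributions are purely imaginary and drop out, and one is left with
\begin{align*}
    \frac12\frac{d}{dt}\|U(t)\|_{\mathbb{L}^2}^2 \le C_0\|U(t)\|_{\mathbb{L}^2}^2 + \|U(t)\|_{\mathbb{L}^2}\,\|(g(t),g_\Gamma(t))\|_{\mathbb{L}^2},
\end{align*}
where $C_0$ bundles $\|\mathrm{div}\,\vec\rho\|_\infty$, $\|\vec\rho\cdot\nu\|_{L^\infty(\Gamma)}$, $\|\mathrm{div}_\Gamma\vec\rho_\Gamma\|_\infty$, $\|\alpha\|_\infty$, $\|\alpha_\Gamma\|_\infty$; dividing by $\|U(t)\|_{\mathbb{L}^2}$ (after a standard regularization to justify the computation) and applying Grönwall's inequality gives the claim.

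The main obstacle is a regularity/justification issue rather than a conceptual one: the energy identity is only formally valid for smooth solutions, so one must either (i) carry out the computation on solutions with data in $D(A)$ and $W^{1,1}(0,T;\mathbb{L}^2)$, obtain the estimate there, and pass to the limit by density, using the continuity of the solution map; or (ii) work directly with the mild-solution formula, in which case the main point to check carefully is that $A-B$ genuinely generates a $C_0$-semigroup on $\mathbb{L}^2$ (not merely on $\mathbb{H}^1$) — i.e., that the dissipativity computation and the resolvent equation go through in the $\mathbb{L}^2$ inner product, which they do by the same elliptic-regularity argument used in the previous proposition, now applied with the weaker requirement $(\vec\rho,\vec\rho_\Gamma)\in\mathbb{W}^{1,\infty}$ and $(\alpha,\alpha_\Gamma)\in\mathbb{L}^\infty$. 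I would present route (i) in the write-up, since it keeps the appendix elementary and reuses the integration-by-parts computations already displayed for $\langle AU,A_WU\rangle_{\mathbb{L}^2}$ and $\langle AU,U\rangle_{\mathbb{L}^2}$.
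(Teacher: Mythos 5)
Your route (i) --- the density argument combined with the $\mathbb{L}^2$ energy identity obtained by multiplying the bulk equation by $\overline{u}$ and the boundary equation by $\overline{u}_\Gamma$, adding, taking real parts so that the $\Delta$/$\Delta_\Gamma$ contributions and the normal-derivative flux cancel, handling the drift by the divergence formula, and concluding with Gr\"onwall and H\"older --- is exactly the proof given in the paper, and your identification of the surviving terms and of the constant $C_0$ is correct. The semigroup alternative you sketch is a valid extra, but the argument you say you would write up coincides with the paper's.
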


\begin{proof}
    We argue by density. Firstly, we multiply the first equation of \eqref{non-homogeneous:prob:appendix} by $\overline{u}$ and integrate on $\Omega$. Secondly, we multiply the second equation of \eqref{non-homogeneous:prob:appendix} by $\overline{u}_\Gamma$ and integrate on $\Gamma$. Next, we add up these identities and take the real part of the obtained equation. Then, the following identity holds for a.e. $t\in (0,T)$:
    \begin{align}
        \label{prop:existence:L2:01}
        \begin{split}
        &\Re \int_\Omega \overline{u}\pt u\,dx + d\Im \int_\Omega \overline{u} \Delta u\, dx + \Re \int_\Omega \overline{u}(\vec{\rho}\cdot \nabla u)\,dx +\Re \int_\Omega \alpha |u|^2\,dx\\
        &+\Re \int_\Gamma \overline{u}_\Gamma \pt u_\Gamma \,d\sigma -d\Im \int_{\Gamma} \overline{u}_\Gamma \pnu u\,d\sigma +\gamma\Im \int_{\Gamma} \overline{u}_\Gamma \Delta_\Gamma u_\Gamma \,d\sigma \\
        &+\Re \int_{\Gamma} \overline{u}_\Gamma (\vec{\rho}_\Gamma \cdot \nabla_\Gamma u_\Gamma)\,d\sigma +\Re \int_{\Gamma} \alpha_{\Gamma}|u_\Gamma|^2\,d\sigma\\
        =&\Re \int_\Omega \overline{u}g\,dx +\Re \int_\Gamma \overline{u}_\Gamma g_\Gamma \,d\sigma.
        \end{split}
    \end{align}
    Notice that
    \begin{align}
        \label{prop:existence:L2:02}
        \Re \int_\Omega \overline{u}\pt u\,dx +\Re \int_\Gamma \overline{u}_\Gamma \pt u_\Gamma\,d\sigma =\dfrac{d}{dt}\left( \dfrac{1}{2}\int_\Omega |u|^2\,dx + \dfrac{1}{2} \int_{\Gamma} |u_\Gamma|^2\,d\sigma \right).
    \end{align}
    Moreover, integration by parts in $\Omega$ and the surface divergence theorem on $\Gamma$ imply that
    \begin{align}
        \label{prop:existence:L2:03}
        d\Im \int_\Omega \overline{u}\Delta u\,dx -d\Im \int_{\Gamma} \overline{u}_\Gamma \pnu u\,d\sigma +\gamma \Im \int_{\Gamma} \overline{u}_\Gamma \Delta_\Gamma u_\Gamma \,d\sigma =0,
    \end{align}
    where we have used the side condition $u =u_\Gamma$ on $\Sigma$. On the other hand,
    \begin{align}
        \label{prop:existence:L2:04}
        \begin{split}
        &\Re \int_\Omega \overline{u}(\vec{\rho}\cdot \nabla u)\,dx +\Re \int_\Gamma \overline{u}_\Gamma (\vec{\rho}_\Gamma \cdot \nabla_\Gamma u_\Gamma)\,d\sigma \\
        =&-\frac{1}{2}\int_\Omega \mathrm{div}(\vec{\rho})|u|^2\,dx -\frac{1}{2}\int_{\Gamma} \mathrm{div}_\Gamma(\vec{\rho}_\Gamma) |u_\Gamma|^2\,d\sigma +\frac{1}{2}\int_{\Gamma}(\vec{\rho}\cdot \nu) |u|^2d\sigma .
        \end{split}
    \end{align}
    Substituting \eqref{prop:existence:L2:02}-\eqref{prop:existence:L2:04} into \eqref{prop:existence:L2:01} and applying the Cauchy-Schwarz inequality we deduce
    \begin{align*}
        \frac{d}{dt} \|(u(\cdot,t),u_\Gamma(\cdot,t))\|_{\mathbb{L}^2}^2 \leq C\| (u(\cdot,t),u_\Gamma(\cdot,t)) \|_{\mathbb{L}^2}^2 + C\|(u(\cdot,t),u_\Gamma(\cdot,t))\|_{\mathbb{L}^2} \|(g(\cdot,t),g_\Gamma (\cdot,t))\|_{\mathbb{L}^2}.
    \end{align*}
Then, applying Gronwall's and H\"older's inequalities, we finally deduce the inequality \eqref{prop:L2:energy:ineq}.
\end{proof}

\begin{proposition}
    \label{appendix:wellposedness}
    Suppose that $(\vec{\rho},\vec{\rho}_\Gamma)\in \mathbb{W}^{1,\infty}$ is real-valued and $(\alpha,\alpha_\Gamma)\in \mathbb{L}^\infty$ such that $\vec{\rho}\cdot \nu \leq 0$ on $\Gamma$. In addition, suppose that $(u_0,u_{\Gamma,0})\in \mathbb{H}^1$ and $(g,g_\Gamma)\in L^1(0,T;\mathbb{H}^1)$. Then, there exists a unique solution $(u,u_\Gamma) \in C^0([0,T];\mathbb{H}^1)$ of \eqref{non-homogeneous:prob:appendix}. Moreover, there exists a constant $C>0$ such that
    \begin{align}
        \label{prop:existence:H1:ineq}
        \|(u,u_\Gamma)\|_{C^0([0,T];\mathbb{H}^1)} \leq C\left(\|(u_0,u_{\Gamma,0})\|_{\mathbb{H}^1} + \|(g,g_\Gamma)\|_{L^1(0,T;\mathbb{H}^1)} \right).
    \end{align}
\end{proposition}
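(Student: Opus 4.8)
The plan is to obtain the $\mathbb{H}^1$ estimate \eqref{prop:existence:H1:ineq} by energy methods, arguing again by density so that all manipulations are justified for smooth data and then passed to the limit. Since the previous proposition already gives the $\mathbb{L}^2$ bound, it suffices to control the $\mathbb{H}^1$-seminorm, i.e. $\|(\nabla u, \nabla_\Gamma u_\Gamma)\|$, or equivalently (by the properties of the Wentzell Laplacian and $D(A_W^{1/2})\equiv\mathbb{H}^1$) the quantity $\|A_W^{1/2}(u,u_\Gamma)\|_{\mathbb{L}^2}^2$. The existence and uniqueness part follows from the semigroup $(e^{tA})_{t\ge0}$ on $\mathbb{H}^1$ constructed in the preceding proposition together with the variation of constants formula $U(t)=e^{t\widetilde A}U_0+\int_0^t e^{(t-s)\widetilde A}G(s)\,ds$, where $\widetilde A$ is the full generator incorporating the zero-order terms $(\alpha,\alpha_\Gamma)$; bounded perturbation of a $C_0$-semigroup generator keeps it a generator, and the growth bound $\|e^{t\widetilde A}\|_{\mathbb{H}^1}\le Ce^{\kappa' t}$ is preserved. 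This already yields \eqref{prop:existence:H1:ineq} directly from $\|U(t)\|_{\mathbb{H}^1}\le Ce^{\kappa' T}(\|U_0\|_{\mathbb{H}^1}+\|G\|_{L^1(0,T;\mathbb{H}^1)})$.

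Alternatively, and more in the spirit of the first proposition's proof, I would obtain the a priori estimate by hand: multiply the first equation of \eqref{non-homogeneous:prob:appendix} by $-\overline{\Delta u}$ (or by $\pt\overline{u}$), integrate over $\Omega$, multiply the second by the corresponding boundary multiplier, integrate over $\Gamma$, add, and take real parts so that the leading second-order dispersive terms recombine (using the boundary coupling $u=u_\Gamma$ and $\pnu u$ from the dynamic condition) into the time derivative of the Dirichlet-type energy $\tfrac12\int_\Omega|\nabla u|^2+\tfrac12\int_\Gamma\gamma|\nabla_\Gamma u_\Gamma|^2$. The drift terms $\vec\rho\cdot\nabla u$ and $\vec\rho_\Gamma\cdot\nabla_\Gamma u_\Gamma$, after integration by parts, produce terms controlled by $\|\vec\rho\|_{\mathbb{W}^{1,\infty}}$ times the $\mathbb{H}^1$-energy plus a boundary term with the favorable sign coming from $\vec\rho\cdot\nu\le0$; the zero-order terms $\alpha u,\alpha_\Gamma u_\Gamma$ and the forcing $(g,g_\Gamma)$ are handled by Cauchy-Schwarz, absorbing $\|g\|_{\mathbb{H}^1}\|u\|_{\mathbb{H}^1}$. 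One then closes with Gr\"onwall's and H\"older's inequalities exactly as before.

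The main obstacle is the rigorous treatment of the boundary terms in the integration-by-parts identity at the $\mathbb{H}^1$ level: when one differentiates or applies the Laplacian, extra boundary integrals involving $\pnu u$, second tangential derivatives, and the curvature of $\Gamma$ appear, and one must verify that they either cancel against each other (via the dynamic coupling $i\pt u_\Gamma+di\pnu u-\gamma i\Delta_\Gamma u_\Gamma+\dots=g_\Gamma$, which is precisely what links the normal derivative on $\Omega$ to the Laplace-Beltrami term on $\Gamma$) or are dominated by the $\mathbb{W}^{1,\infty}$ norms of the drift fields and a small multiple of the energy. This is where the structural hypothesis $\vec\rho\cdot\nu\le0$ and the self-adjointness/non-negativity of $A_W$ are essential. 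For this reason I would favor the semigroup argument as the clean route: invoke the preceding proposition for $(e^{tA})_{t\ge0}$ on $\mathbb{H}^1$, note $(\alpha,\alpha_\Gamma)\in\mathbb{L}^\infty$ induces a bounded operator on $\mathbb{H}^1$ only if multiplication by $\mathbb{L}^\infty$ preserves $\mathbb{H}^1$ — which it does not in general, so in fact one treats the zero-order terms as an $\mathbb{L}^2$-valued (hence $\mathbb{H}^{-1}$-type) forcing and uses a mild-solution bootstrap, or one simply keeps them inside $G$ and uses that $\|\alpha u\|_{\mathbb{L}^2}\le\|\alpha\|_\infty\|u\|_{\mathbb{L}^2}$ together with the $\mathbb{L}^2$ estimate already proved, feeding this back through Duhamel in $\mathbb{H}^1$ after first establishing the estimate with $(\alpha,\alpha_\Gamma)=0$ and then perturbing. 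Combining the homogeneous semigroup bound with Duhamel and Gr\"onwall gives \eqref{prop:existence:H1:ineq}, and uniqueness follows from linearity and the $\mathbb{L}^2$ estimate of the previous proposition.
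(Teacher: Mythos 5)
Your second, ``alternative'' route is essentially the paper's proof. The paper multiplies the bulk equation by $-d\Delta\overline{u}$ and the boundary equation by the matching multiplier $-\gamma\Delta_\Gamma\overline{u}_\Gamma+d\pnu\overline{u}$ (i.e.\ it tests against $A_WU$), takes real parts so that the leading terms recombine into $\frac{d}{dt}\bigl(\frac{d}{2}\int_\Omega|\nabla u|^2\,dx+\frac{\gamma}{2}\int_\Gamma|\nabla_\Gamma u_\Gamma|^2\,d\sigma\bigr)$, rewrites the drift contributions through $\nabla(\vec{\rho}\cdot\nabla u)\cdot\nabla\overline{u}=\vec{\nabla}\vec{\rho}(\nabla u,\nabla\overline{u})+\frac{1}{2}\vec{\rho}\cdot\nabla(|\nabla u|^2)$ and the decomposition $\nabla u=\nabla_\Gamma u+(\pnu u)\nu$ on $\Gamma$, discards the boundary term $-\frac{d}{2}\int_\Gamma(\vec{\rho}\cdot\nu)|\pnu u|^2\,d\sigma$ thanks to the sign hypothesis, and closes with Gronwall's and H\"older's inequalities. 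Your outline of that branch, including the role of $\vec{\rho}\cdot\nu\le 0$ and of the dynamic coupling in absorbing the $\pnu u$ boundary terms, is accurate.

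The route you say you would \emph{favor} --- Duhamel on $\mathbb{H}^1$ with the zero-order terms fed back as forcing --- does not close as you describe it. You correctly note that multiplication by an $\mathbb{L}^\infty$ potential is not bounded on $\mathbb{H}^1$; but your fallback of placing $\alpha u$ into $G$ and invoking the $\mathbb{L}^2$ estimate only yields $G\in L^1(0,T;\mathbb{L}^2)$, and the Schr\"odinger group has no smoothing effect, so $\int_0^te^{(t-s)A}(\alpha u)(s)\,ds$ is controlled in $\mathbb{L}^2$ but not in $\mathbb{H}^1$: the bootstrap cannot upgrade the regularity. The way out is that the potentials genuinely need to be $\mathbb{W}^{1,\infty}$ here --- this is the hypothesis of Proposition~\ref{proposition:wellposedness}(b) in the main text, and the paper's own identity \eqref{prop:existence:H1:04} uses $\nabla\alpha$, so the $\mathbb{L}^\infty$ assumption in the appendix statement is a slip. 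Under $(\alpha,\alpha_\Gamma)\in\mathbb{W}^{1,\infty}$, multiplication is bounded on $\mathbb{H}^1$ and your bounded-perturbation/Duhamel argument becomes a clean and correct alternative. As written, however, the branch you prefer has a genuine gap, and only your energy-method branch (which coincides with the paper's argument) is complete in outline.
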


\begin{proof}
    We argue by density. First, we multiply the first equation of \eqref{non-homogeneous:prob:appendix} by $-d\Delta \overline{u}$ and integrate in $\Omega$. Secondly, we multiply the second equation of \eqref{non-homogeneous:prob:appendix} by $(-\gamma \Delta_\Gamma \overline{u}_\Gamma +d\pnu \overline{u})$ and integrate on $\Gamma$. Next, adding up these identities and taking the real part we have
    \begin{align}
        \label{prop:existence:H1:01}
        \begin{split}
        &-d\Re \int_\Omega \pt u\Delta \overline{u}\,dx -d\Re \int_\Omega (\vec{\rho}\cdot \nabla u)\Delta \overline{u}\,dx -d\Re \int_\Omega \alpha u \Delta \overline{u}\,dx \\
        &+\Re\int_\Gamma \pt u_\Gamma (-\gamma \Delta_\Gamma \overline{u}_\Gamma +d\pnu \overline{u})\,d\sigma +\Re \int_\Gamma (\vec{\rho}\cdot \nabla_\Gamma u_\Gamma)(-\gamma \Delta_\Gamma \overline{u}_\Gamma +d\pnu \overline{u})\,d\sigma \\
        &+\Re \int_{\Gamma}\alpha_\Gamma u_\Gamma (-\gamma \Delta_\Gamma \overline{u}_\Gamma +d\pnu \overline{u})\,d\sigma \\
        =&-d\Re \int_\Omega g\Delta \overline{u}\,dx + \Re \int_\Gamma g_\Gamma (-\gamma \Delta_\Gamma \overline{u}_\Gamma +d\pnu \overline{u})\,d\sigma .
        \end{split}
    \end{align}
    Now, integration by parts in $\Omega$ and the surface divergence theorem on $\Gamma$ imply that
    \begin{align}
        \label{prop:existence:H1:02}
        \begin{split}
            &-d\Re \int_\Omega \pt u\Delta \overline{u}\,dx +\Re \int_{\Gamma} \pt u_\Gamma (-\gamma \Delta_\Gamma \overline{u}_\Gamma +d\pnu \overline{u})\,d\sigma \\
            =&\dfrac{d}{dt} \left(\frac{d}{2}\int_\Omega |\nabla u|^2\,dx + \frac{\gamma}{2} \int_\Gamma |\nabla_\Gamma u_\Gamma|^2\,d\sigma \right).
        \end{split}
    \end{align}
    In addition,
    \begin{align*}
        \mu:=&-d\Re \int_\Omega (\vec{\rho}\cdot \nabla u)\Delta \overline{u}\,dx + \Re\int_{\Gamma} (\vec{\rho}_\Gamma \cdot \nabla_\Gamma u_\Gamma)(-\gamma \Delta_\Gamma \overline{u}_\Gamma +d\pnu \overline{u})\,d\sigma\\
        =&d\Re \int_\Omega \nabla (\vec{\rho}\cdot \nabla u)\cdot \nabla \overline{u}\,dx -d\Re\int_{\Gamma} (\vec{\rho}\cdot \nabla u)\pnu \overline{u}\, d\sigma \\
        &+\gamma \Re\int_{\Gamma}\nabla_\Gamma (\vec{\rho}_\Gamma \cdot \nabla_\Gamma u_\Gamma)\cdot \nabla_\Gamma \overline{u}_\Gamma \,d\sigma +d\Re \int_{\Gamma} (\vec{\rho}_\Gamma \cdot \nabla_\Gamma u_\Gamma)\pnu \overline{u}\,d\sigma .
    \end{align*}
    Since
    \begin{align*}
        &\nabla(\vec{\rho}\cdot \nabla u)\cdot \nabla\overline{u}=\vec{\nabla}\vec{\rho}(\nabla u,\nabla \overline{u}) +\frac{1}{2}\vec{\rho}\cdot \nabla (|\nabla u|^2),\\
        \nabla_\Gamma (\vec{\rho}_\Gamma &\cdot\nabla_\Gamma u_\Gamma)\cdot \nabla_\Gamma \overline{u}_\Gamma =\vec{\nabla}_\Gamma \vec{\rho}_\Gamma (\nabla_\Gamma u_\Gamma,\nabla_\Gamma \overline{u}_\Gamma) + \frac{1}{2}\vec{\rho}_\Gamma \cdot \nabla_\Gamma (|\nabla_\Gamma u_\Gamma|^2),
    \end{align*}
    and that $\nabla u=\nabla_\Gamma u+(\pnu u)\nu$ on $\Gamma$, we obtain
    \begin{align}
        \label{prop:existence:H1:03}
        \begin{split}
            \mu:= &d\Re \int_\Omega \vec{\nabla}\vec{\rho}(\nabla u,\nabla\overline{u})\,dx +\gamma \Re \int_{\Gamma} \vec{\nabla}\vec{\rho}_\Gamma (\nabla_\Gamma u_\Gamma,\nabla_\Gamma \overline{u}_\Gamma)\,d\sigma \\
            &-\frac{d}{2} \int_\Omega \mathrm{div}(\vec{\rho})|\nabla u|^2\,dx -\frac{\gamma}{2} \int_\Gamma \mathrm{div}_\Gamma (\vec{\rho}_\Gamma) |\nabla_\Gamma u_\Gamma|^2\,d\sigma \\
            &-\frac{d}{2}\int_\Gamma (\vec{\rho}\cdot \nu) |\pnu u|^2\,d\sigma +\frac{d}{2}\int_{\Gamma}(\vec{\rho}\cdot \nu) |\nabla_\Gamma u_\Gamma|^2\,d\sigma,
        \end{split}
    \end{align}
    where we have used integration by parts and the fact that $\vec{\rho}=\vec{\rho}_\Gamma$ on $\Gamma$. In addition, we have
    \begin{align}
        \label{prop:existence:H1:04}
        \begin{split}
            &-d\Re \int_\Omega \alpha u\Delta \overline{u}\,dx +\Re \int_\Gamma \alpha_\Gamma u_\Gamma (-\gamma \Delta_\Gamma \overline{u}_\Gamma +d\pnu \overline{u})\,d\sigma \\
            =&d \Re\int_\Omega \alpha |\nabla u|^2\,dx +\gamma \Re\int_{\Gamma} \alpha_\Gamma |\nabla_\Gamma u_\Gamma|^2\,d\sigma\\
            &+d\Re \int_\Omega u\nabla\alpha\cdot \nabla \overline{u}\,dx + \gamma \Re \int_{\Gamma}u_\Gamma \nabla_\Gamma \alpha_\Gamma \cdot \nabla_\Gamma \overline{u}_\Gamma \, d\sigma .
        \end{split}
    \end{align}
Since $g=g_\Gamma$ on $\Sigma$, we deduce that
    \begin{align}
        \label{prop:existence:H1:05}
        \begin{split}
            &-d\Re\int_\Omega g\Delta \overline{u}\,dx + \Re \int_{\Gamma}g_\Gamma (-\gamma \Delta_\Gamma \overline{u}_\Gamma +d\pnu \overline{u})\,d\sigma \\
            =&d\Re \int_\Omega \nabla g\cdot \nabla \overline{u}\,dx + \gamma \Re\int_{\Gamma} \nabla_\Gamma g_\Gamma \cdot \nabla_\Gamma \overline{u}_\Gamma\,d\sigma .
        \end{split}
    \end{align}
    Thus, substituting \eqref{prop:existence:H1:02}-\eqref{prop:existence:H1:05} into \eqref{prop:existence:H1:01} and using the fact that $\vec{\rho}\cdot \nu \leq 0$ on $\Gamma$, we can assert that
    \begin{align*}
        \frac{d}{dt}\|(u(\cdot,t),u_\Gamma(\cdot,t))\|_{\mathbb{H}^1}^2 \leq C\|(u(\cdot,t),u_\Gamma(\cdot,t))\|_{\mathbb{H}^1}^2 + C\|(u(\cdot,t),u_\Gamma(\cdot,t))\|_{\mathbb{H}^1}\cdot \|(g,g_\Gamma)\|_{\mathbb{H}^1}.
    \end{align*}
    Finally, by Gronwall's and H\"older's inequalities we obtain \eqref{prop:existence:H1:ineq}.
\end{proof}

\subsection{Hidden regularity}

\begin{proposition}
	\label{Proposition:previous:hidden}
    Consider $\vec{q}\in C^2(\overline{Q};\mathbb{R}^N)$ and consider the same assumptions of Proposition \ref{appendix:wellposedness}. Then, we have the following identity
    \begin{align}
        \dfrac{d}{2}\iint_\Sigma (\vec{q}\cdot \nu) |\pnu u|^2\,d\sigma \,dt=\mathcal{X}+\mathcal{Y}+\mathcal{G},
    \end{align}
    where $\mathcal{X},\mathcal{Y}$ and $\mathcal{G}$ are given by
    \begin{align*}
        \mathcal{X}:=&-\frac{\gamma}{2} \iint_{\Sigma} (\vec{q}\cdot \nu) |\nabla_\Gamma u_\Gamma|^2\,d\sigma\, dt -\frac{1}{2}\Im \iint_{\Sigma} (\vec{q}\cdot \nu)\alpha_{\Gamma} |u_\Gamma|^2\,d\sigma \,dt \\
        &+\frac{d}{2}\iint_{Q} \vec{\nabla}\vec{q}(\nabla u,\nabla \overline{u})\,dx \,dt + \frac{d}{2}\iint_{Q}\mathrm{div}(\vec{q})|\nabla u|^2\,dx\,dt\\
        &+\frac{\gamma}{2} \iint_{\Sigma} \vec{\nabla}_\Gamma \vec{q}(\nabla_\Gamma u_\Gamma ,\nabla_\Gamma u_\Gamma)\, d\sigma\, dt +\frac{\gamma}{2} \iint_{\Sigma} \mathrm{div}_\Gamma(\vec{q})|\nabla_\Gamma u_\Gamma|^2\,d \sigma \, dt\\
        &+\Im \iint_{Q} (\vec{\rho}\cdot \nabla u)(\vec{q}\cdot \nabla \overline{u})\,dx\,dt +\frac{1}{2}\Im \iint_{Q} \alpha\mathrm{div}(\vec{q}) |u|^2\,dx\,dt \\
        &+\Im \iint_{\Sigma} (\vec{\rho}_\Gamma \cdot \nabla_\Gamma u_\Gamma)(\vec{q}\cdot \nabla_\Gamma \overline{u}_\Gamma)\,d\sigma \,dt +\frac{1}{2}\Im \iint_\Sigma \alpha_{\Gamma} \mathrm{div}_\Gamma(\vec{q})|u_\Gamma|^2\,d\sigma \,dt .
    \end{align*}
    \begin{align*}
        \mathcal{Y}:=&\frac{1}{2}\Im \iint_{Q} \overline{u}\nabla u\cdot \pt \vec{q}\,dx\,dt -\frac{1}{2}\Im \int_\Omega \overline{u}\nabla u\cdot \vec{q}\,\bigg|_0^T \,dx \\
        &-\frac{d}{2}\Re \iint_{\Sigma} (\vec{q}\cdot \nu)\overline{u}_\Gamma \pnu u \,d\sigma \, dt -\frac{1}{2}\Im \iint_{\Sigma} (\vec{q}\cdot \nabla_\Gamma \nu) (\vec{\rho}_\Gamma \cdot \nabla u_\Gamma) \overline{u}_\Gamma \,d\sigma\, dt\\
        &+\frac{1}{2}\Im \iint_{\Sigma} (\nabla_\Gamma u_\Gamma \cdot \pt \vec{q})\overline{u}_\Gamma \,d\sigma\, dt -\frac{1}{2}\Im \iint_{\Sigma}(\vec{q}\cdot \nabla_\Gamma u_\Gamma)\overline{u}_\Gamma \,\bigg|_0^T d\sigma \\
        &+\frac{d}{2}\Re \iint_{Q} \overline{u}\nabla (\mathrm{div}(\vec{q}))\cdot \nabla u\,dx\,dt +\dfrac{1}{2}\Re \iint_{\Sigma} \pnu u \vec{q}\cdot \nabla_\Gamma u_\Gamma \, d\sigma \, dt\\
        &+\frac{\gamma}{2} \Re \iint_\Sigma \overline{u}_\Gamma \nabla_\Gamma  (\mathrm{div}_\Gamma(\vec{q}))\cdot \nabla_\Gamma u_\Gamma\,d\sigma\,dt +\frac{1}{2}\Im \iint_{Q} \mathrm{div}(\vec{q})(\vec{\rho}\cdot \nabla u)\overline{u}\,dx\,dt \\
        &+\Im \iint_{Q} \alpha(\vec{q}\cdot \nabla \overline{u})u\,dx\,dt +\frac{1}{2}\Im\iint_{\Sigma} \mathrm{div}_\Gamma(\vec{q})(\vec{\rho}_\Gamma \cdot \nabla_\Gamma u_\Gamma)\overline{u}_\Gamma \,d\sigma \,dt\\
        &+\Im \iint_{\Sigma} \alpha_\Gamma (\vec{q}\cdot \nabla_\Gamma \overline{u}_\Gamma)u_\Gamma d\sigma dt
    \end{align*}
    and
    \begin{align*}
        \mathcal{G}:=&\frac{1}{2}\Im \iint_{\Sigma} (\vec{q}\cdot \nu)\overline{u}_\Gamma g_\Gamma d\sigma dt -\Im \iint_{Q} g\left(\vec{q}\cdot\nabla \overline{u}+\frac{1}{2}\mathrm{div}(\vec{q}) \overline{u}\right)\,dx\,dt \\
        &-\Im \iint_{\Sigma} g_\Gamma \left(\vec{q}\cdot \nabla_\Gamma u_\Gamma +\frac{1}{2}\mathrm{div}_\Gamma(\vec{q})\overline{u}_\Gamma \right)\,d\sigma\,dt .
    \end{align*}
\end{proposition}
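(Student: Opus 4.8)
The plan is to prove the identity by the multiplier method. As in the previous two propositions, one argues by density: it suffices to establish the identity for $(u_0,u_{\Gamma,0})$ in a dense subspace of $\mathbb{H}^1$ and $(g,g_\Gamma)$ smooth, for which the corresponding solution of \eqref{non-homogeneous:prob:appendix} is regular enough that all integrations by parts below are classical; the general case then follows by continuity, the membership $\pnu u\in L^2(\Sigma)$ (which makes the left-hand side meaningful) being obtained a posteriori by specializing the identity to a field $\vec{q}$ with $\vec{q}\cdot\nu\ge c>0$ on $\Gamma$ and passing to the limit in the resulting uniform bound. The multipliers to be used are the Morawetz-type fields $\overline{\mathcal{M}u}:=\vec{q}\cdot\nabla\overline{u}+\tfrac12\mathrm{div}(\vec{q})\,\overline{u}$ for the bulk equation and $\overline{\mathcal{M}_\Gamma u_\Gamma}:=\vec{q}\cdot\nabla_\Gamma\overline{u}_\Gamma+\tfrac12\mathrm{div}_\Gamma(\vec{q})\,\overline{u}_\Gamma$ for the dynamic boundary equation. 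I would multiply the first equation of \eqref{non-homogeneous:prob:appendix} by $\overline{\mathcal{M}u}$ and integrate over $Q$, multiply the second by $\overline{\mathcal{M}_\Gamma u_\Gamma}$ and integrate over $\Sigma$, add the two resulting identities, and take the imaginary part.

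Then the contributions are processed one at a time. Integrating the time derivatives by parts in $t$ produces the endpoint terms $\Im\int_\Omega\overline{u}\,\nabla u\cdot\vec{q}\,\big|_0^T$ and $\Im\iint_\Sigma(\vec{q}\cdot\nabla_\Gamma u_\Gamma)\overline{u}_\Gamma\,\big|_0^T$ together with the terms carrying $\pt\vec{q}$, which form the first lines of $\mathcal{Y}$. For the Laplacian $-di\Delta u$, Green's formula in $\Omega$ transfers one derivative; splitting $\nabla\overline{u}\big|_\Gamma=\nabla_\Gamma\overline{u}_\Gamma+(\pnu\overline{u})\nu$ in the boundary integral and using the divergence-theorem identity $\iint_Q\vec{q}\cdot\nabla|\nabla u|^2=\iint_\Sigma(\vec{q}\cdot\nu)|\nabla u|^2-\iint_Q\mathrm{div}(\vec{q})|\nabla u|^2$ together with $|\nabla u|^2\big|_\Gamma=|\nabla_\Gamma u_\Gamma|^2+|\pnu u|^2$, the two half-integrals involving $|\pnu u|^2$ combine and, once brought to the left side of the final identity, give exactly the term $\tfrac d2\iint_\Sigma(\vec{q}\cdot\nu)|\pnu u|^2$ that we isolate; the remaining interior pieces ($\vec{\nabla}\vec{q}(\nabla u,\nabla\overline{u})$, $\mathrm{div}(\vec{q})|\nabla u|^2$ and $\overline{u}\,\nabla(\mathrm{div}(\vec{q}))\cdot\nabla u$) go into $\mathcal{X}$ and $\mathcal{Y}$. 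The coupling term $di\pnu u$, tested against $\overline{\mathcal{M}_\Gamma u_\Gamma}$, cancels the term $d\Re\iint_\Sigma\pnu u\,(\vec{q}\cdot\nabla_\Gamma\overline{u}_\Gamma)$ created by the previous step and leaves only lower-order boundary terms. The Laplace–Beltrami term $-\gamma i\Delta_\Gamma u_\Gamma$ is handled by the surface divergence theorem on the closed manifold $\Gamma$ (no boundary terms) and the intrinsic analogue of the identity for $\vec{q}\cdot\nabla_\Gamma|\nabla_\Gamma u_\Gamma|^2$; this yields the tangential terms $\vec{\nabla}_\Gamma\vec{q}(\nabla_\Gamma u_\Gamma,\nabla_\Gamma u_\Gamma)$, $\mathrm{div}_\Gamma(\vec{q})|\nabla_\Gamma u_\Gamma|^2$ and $\overline{u}_\Gamma\,\nabla_\Gamma(\mathrm{div}_\Gamma(\vec{q}))\cdot\nabla_\Gamma u_\Gamma$ of $\mathcal{X}$ and $\mathcal{Y}$, and, because $\vec{q}\big|_\Gamma$ need not be tangential, the differentiation of $\nu$ along $\Gamma$ introduces the second fundamental form — the source of the $\vec{q}\cdot\nabla_\Gamma\nu$ contributions. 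Finally, the drift, potential and source terms are inserted directly and, after one more integration by parts to symmetrize, produce the $\Im$-integrals of $(\vec{\rho}\cdot\nabla u)(\vec{q}\cdot\nabla\overline{u})$, $\alpha\,\mathrm{div}(\vec{q})|u|^2$, $\alpha(\vec{q}\cdot\nabla\overline{u})u$ and their boundary analogues, as well as the source integrals $-\Im\iint_Q g(\vec{q}\cdot\nabla\overline{u}+\tfrac12\mathrm{div}(\vec{q})\overline{u})$ and $-\Im\iint_\Sigma g_\Gamma(\vec{q}\cdot\nabla_\Gamma\overline{u}_\Gamma+\tfrac12\mathrm{div}_\Gamma(\vec{q})\overline{u}_\Gamma)$ of $\mathcal{G}$.

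At this stage one residual boundary integral of the form $\Re\iint_\Sigma(\vec{q}\cdot\nu)\pnu u\,\overline{u}_\Gamma$ remains; half of it is retained as the $\mathcal{Y}$-term $-\tfrac d2\Re\iint_\Sigma(\vec{q}\cdot\nu)\overline{u}_\Gamma\pnu u$, and into the other half one substitutes the value of $d\,\pnu u$ read off from the dynamic boundary equation, namely $d\,\pnu u=-i\big(g_\Gamma-\pt u_\Gamma+\gamma i\Delta_\Gamma u_\Gamma-\vec{\rho}_\Gamma\cdot\nabla_\Gamma u_\Gamma-\alpha_\Gamma u_\Gamma\big)$. After a last application of Green's formula on $\Gamma$ to the $\Delta_\Gamma u_\Gamma$ piece, this substitution delivers the remaining terms: $\tfrac12\Im\iint_\Sigma(\vec{q}\cdot\nu)\overline{u}_\Gamma g_\Gamma$ in $\mathcal{G}$, the terms $-\tfrac12\Im\iint_\Sigma(\vec{q}\cdot\nu)\alpha_\Gamma|u_\Gamma|^2$ and $-\tfrac\gamma2\iint_\Sigma(\vec{q}\cdot\nu)|\nabla_\Gamma u_\Gamma|^2$ in $\mathcal{X}$, and the last batch of $\mathcal{Y}$-terms. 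Collecting everything — keeping $\tfrac d2\iint_\Sigma(\vec{q}\cdot\nu)|\pnu u|^2$ on one side, the genuinely quadratic energy-type terms in $\mathcal{X}$, the endpoint-in-time, curvature and lower-order terms in $\mathcal{Y}$, and the source terms in $\mathcal{G}$ — yields the stated identity.

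The difficulty is essentially organizational rather than conceptual: one must track the numerous boundary integrals generated by the three integrations by parts (in $t$, in $\Omega$, and intrinsically on $\Gamma$). The two genuinely delicate points are the intrinsic calculus on $\Gamma$ — the restriction of $\vec{q}$ to $\Gamma$ is not tangential, so its normal component couples with $\nabla_\Gamma\nu$ and generates the curvature terms — and the matching of the interior boundary terms with the dynamic boundary equation through the transmission conditions $u\big|_\Gamma=u_\Gamma$, $g\big|_\Gamma=g_\Gamma$ and the substitution of $d\,\pnu u$ described above.
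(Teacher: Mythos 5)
Your proposal follows essentially the same route as the paper: a density argument, multiplication of the bulk and boundary equations by the Morawetz-type multipliers $\vec{q}\cdot\nabla\overline{u}+\tfrac12\mathrm{div}(\vec{q})\,\overline{u}$ and $\vec{q}\cdot\nabla_\Gamma\overline{u}_\Gamma+\tfrac12\mathrm{div}_\Gamma(\vec{q})\,\overline{u}_\Gamma$ (the paper takes these with an overall minus sign, which is immaterial), summation, taking the imaginary part, and then integration by parts together with the surface divergence theorem to handle the terms in $\Delta u$, $\Delta_\Gamma u_\Gamma$, $\pt u$ and $\pt u_\Gamma$. Your write-up is in fact more detailed than the paper's own sketch, which defers the bookkeeping to Machtyngier's argument for the Dirichlet case.
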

\begin{proof}
    The proof follows from multiplier techniques for the Schr\"odinger equation; see for instance \cite{Mach94} for Dirichlet boundary conditions (see also \cite{Zua03} for a concise review). However, as we deal with dynamic boundary conditions, we should handle several new boundary terms. For the sake of completeness, we sketch the main steps of the proof.

    We argue by density. We multiply the first equation of \eqref{non-homogeneous:prob:appendix} by $-(\vec{q}\cdot \nabla \overline{u} +\frac{1}{2}\mathrm{div}(\vec{q})\overline{u})$ and integrate in $Q$. Similarly, we multiply the second equation by $-(\vec{q}\cdot \nabla_\Gamma \overline{u}_\Gamma +\frac{1}{2}\mathrm{div}_\Gamma(\vec{q})\overline{u}_\Gamma)$ and integrate the results on $\Sigma$. Next, we add up both identities and take the imaginary part. After that, we use integration by parts and the surface divergence theorem to deal with the terms involving $\Delta u$, $\Delta_\Gamma u_\Gamma$, $\pt u$ and $\pt u_\Gamma$. The rest of the proof runs as in \cite{Mach94}.
\end{proof}

\begin{lemma}
\label{Proposition:Hidden:regularity}
Assume the same conditions of Proposition \ref{appendix:wellposedness}. Then, the normal derivative $\pnu u$ associated with the solution of \eqref{non-homogeneous:prob:appendix} belongs to $L^2(\Sigma)$. Moreover, there exists $C>0$ which depends on $d,\gamma,\rho,\rho_\Gamma, \alpha,\alpha_\Gamma$ such that
    \begin{align*}
    	\|\pnu u\|_{L^2(\Sigma)} \leq C\left(\|(u_0,u_{\Gamma,0})\|_{\mathbb{H}^{1}} +\|(g,g_\Gamma)\|_{L^1(0,T;\mathbb{H}^1)}\right).
    \end{align*}
\end{lemma}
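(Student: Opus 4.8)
The plan is to use the multiplier identity from Proposition~\ref{Proposition:previous:hidden} with a suitable choice of vector field $\vec{q}$ and then bound the right-hand side terms $\mathcal{X}$, $\mathcal{Y}$, $\mathcal{G}$ using the already-established energy estimates of Proposition~\ref{appendix:wellposedness}. First I would choose $\vec{q}\in C^2(\overline{Q};\mathbb{R}^N)$ to be (time-independent and) an extension of the outward unit normal $\nu$ near $\Gamma$, so that $\vec{q}\cdot\nu = 1$ on $\Sigma$; such an extension exists because $\Gamma$ is of class $C^2$. With this choice the left-hand side of the identity becomes exactly $\tfrac{d}{2}\iint_\Sigma |\pnu u|^2\,d\sigma\,dt$, which is (a constant times) the quantity we want to control.

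Next I would estimate each of the three groups of terms. The terms in $\mathcal{X}$ fall into two types: volume integrals over $Q$ involving $\nabla u$, $|u|^2$, and the coefficients $\vec{\rho}$, $\alpha$ — all controlled by $\|(u,u_\Gamma)\|_{C^0([0,T];\mathbb{H}^1)}^2$ via Cauchy--Schwarz, since $(\vec\rho,\vec\rho_\Gamma)\in\mathbb{W}^{1,\infty}$ and $(\alpha,\alpha_\Gamma)\in\mathbb{L}^\infty$; and surface integrals over $\Sigma$ involving only $\nabla_\Gamma u_\Gamma$ and $u_\Gamma$ (note: crucially, no $\pnu u$ appears on the right, since the $\vec q\cdot\nu$ factor was used up on the left), which are controlled by $\|u_\Gamma\|_{C^0([0,T];H^1(\Gamma))}^2 \le \|(u,u_\Gamma)\|_{C^0([0,T];\mathbb{H}^1)}^2$ using the continuity of the trace $\mathbb{H}^1\to H^1(\Gamma)$ — already built into the definition of $\mathbb{H}^1$. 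The terms in $\mathcal{Y}$ are handled the same way: the volume terms by the $\mathbb{H}^1$-energy, the pointwise-in-time terms $\big|_0^T$ by evaluating at $t=0$ and $t=T$ and using $(u,u_\Gamma)\in C^0([0,T];\mathbb{H}^1)$, and the surface terms by the trace estimate — with the single exception of the terms $-\tfrac{d}{2}\Re\iint_\Sigma(\vec q\cdot\nu)\overline{u}_\Gamma\pnu u$ and $\tfrac12\Re\iint_\Sigma \pnu u\,\vec q\cdot\nabla_\Gamma u_\Gamma$ which do contain $\pnu u$; these I would absorb into the left-hand side via Young's inequality $|ab|\le \tfrac{\varepsilon}{2}|a|^2+\tfrac{1}{2\varepsilon}|b|^2$, putting the $\varepsilon\|\pnu u\|_{L^2(\Sigma)}^2$ part on the left (choosing $\varepsilon$ small relative to $d/2$) and the remaining $\|u_\Gamma\|^2_{H^1(\Gamma)}$-type part into the energy bound. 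Finally, $\mathcal{G}$ is estimated by Cauchy--Schwarz using $(g,g_\Gamma)\in L^1(0,T;\mathbb{H}^1)$ together with the $\mathbb{H}^1$-bound on $(u,u_\Gamma)$.

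Putting these together yields
\begin{align*}
    \tfrac{d}{2}\|\pnu u\|_{L^2(\Sigma)}^2 \le \varepsilon \|\pnu u\|_{L^2(\Sigma)}^2 + C\Big(\|(u,u_\Gamma)\|_{C^0([0,T];\mathbb{H}^1)}^2 + \|(u,u_\Gamma)\|_{C^0([0,T];\mathbb{H}^1)}\|(g,g_\Gamma)\|_{L^1(0,T;\mathbb{H}^1)}\Big),
\end{align*}
and after absorbing the $\varepsilon$-term on the left and invoking the a~priori estimate \eqref{prop:existence:H1:ineq} from Proposition~\ref{appendix:wellposedness}, i.e. $\|(u,u_\Gamma)\|_{C^0([0,T];\mathbb{H}^1)}\le C(\|(u_0,u_{\Gamma,0})\|_{\mathbb{H}^1}+\|(g,g_\Gamma)\|_{L^1(0,T;\mathbb{H}^1)})$, one obtains the claimed bound on $\|\pnu u\|_{L^2(\Sigma)}$ after taking square roots (using $\sqrt{a^2+ab}\le C(a+b)$ type elementary inequalities). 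As usual this estimate is first proved for smooth data and solutions — where all the integrations by parts in Proposition~\ref{Proposition:previous:hidden} are justified — and then extended to the general case by density, which also rigorously defines $\pnu u\in L^2(\Sigma)$ as the limit.

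The main obstacle I anticipate is the careful bookkeeping of the boundary terms in $\mathcal{Y}$: one must verify that after the choice $\vec q\cdot\nu\equiv 1$ on $\Sigma$, the \emph{only} occurrences of $\pnu u$ on the right-hand side are the two terms identified above (so that they can be absorbed), while every other $\Sigma$-integral involves at worst $\nabla_\Gamma u_\Gamma$ and $u_\Gamma$ (tangential quantities controlled by the trace) and the smooth data $g_\Gamma$. The term $-\tfrac12\Im\iint_\Sigma (\vec q\cdot\nabla_\Gamma\nu)(\vec\rho_\Gamma\cdot\nabla u_\Gamma)\overline{u}_\Gamma$ deserves particular attention, since $\nabla u_\Gamma$ a priori includes a normal component; however, on $\Sigma$ one has $\nabla u_\Gamma = \nabla_\Gamma u_\Gamma + (\pnu u)\nu$, and the geometric factor $\vec q\cdot\nabla_\Gamma\nu$ is tangential (as $\nabla_\Gamma\nu$, the second fundamental form, maps into the tangent space and $|\nabla_\Gamma\nu|$ is bounded because $\Gamma\in C^2$), so this term in fact only sees the tangential part $\nabla_\Gamma u_\Gamma$; this is a point to state explicitly. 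Once the term-by-term classification is done, the remaining estimates are routine applications of Cauchy--Schwarz, Young, and the trace inequality.
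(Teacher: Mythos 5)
Your proposal is correct and follows essentially the same route as the paper: choose $\vec q\in C^2$ with $\vec q=\nu$ on $\Gamma$, invoke the identity of Proposition \ref{Proposition:previous:hidden}, bound $\mathcal{X}$, $\mathcal{Y}$, $\mathcal{G}$ via the energy estimate \eqref{prop:existence:H1:ineq}, and conclude by a density argument. Your explicit absorption of the two $\pnu u$-containing terms in $\mathcal{Y}$ via Young's inequality with small $\varepsilon$ is in fact a more careful rendering of the step the paper dispatches with the phrase ``by Young's inequality.''
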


\begin{proof}
	Let us choose $\vec{q}\in C^2(\overline{\Omega};\mathbb{R}^N)$ such that $q=\nu$ on $\Gamma$. The existence of this vector field is guaranteed (see \cite{JLLions}). Then, by Proposition \ref{Proposition:previous:hidden}, we see that
	\begin{align}
		\label{eq:hidden:ineq}
		\frac{d}{2}\|\pnu u\|_{L^2(\Sigma)}^2 =\mathcal{X}+ \mathcal{Y}+\mathcal{G}.
	\end{align}
Now, since $(\rho,\rho_\Gamma)$, $\vec{q}$ and their derivatives are bounded, we easily deduce that
	\begin{align}
		\nonumber
		|\mathcal{X}|\leq &C\|(u,u_\Gamma)\|^2_{C^0([0,T];\mathbb{H}^1)}\\
		\label{estimate:mathcalX}
		\leq & C \left( \|(u_0,u_{\Gamma,0})\|_{\mathbb{H}^1}^2 + \|(g,g_\Gamma)\|_{L^1(0,T;\mathbb{H}^1)}^2 \right),
	\end{align}
where we have used Proposition \ref{appendix:wellposedness}. On the other hand, by Young's inequality, it is easy to see that $\mathcal{Y}$ can be estimated in the following way:
	\begin{align}
		\label{estimate:mathcalY}
		|\mathcal{Y}|\leq C\|(u_0,u_{\Gamma,0})\|_{\mathbb{H}^1}^2 + C\|(g,g_\Gamma)\|_{L^1(0,T;\mathbb{H}^1)}^2.
	\end{align}
In a similar way, by Cauchy-Scharwz inequality we can estimate $\mathcal{G}$ as follows:
	\begin{align}
		\label{estimate:mathcalG}
		|\mathcal{G}|\leq C\|(u_0,u_{\Gamma,0})\|_{\mathbb{H}^1}^2 + C\|(g,g_\Gamma)\|_{L^1(0,T;\mathbb{H}^1)}^2.
	\end{align}
Finally, substituting \eqref{estimate:mathcalX}-\eqref{estimate:mathcalG} into \eqref{eq:hidden:ineq}, we get our desired conclusion.
\end{proof}

\section*{Acknowledgment}
The basic idea of this work started when the first and third named authors attended the conference ``X Partial differential equations, optimal design and numerics (2024)” held at the `Centro de Ciencias de Benasque Pedro Pascual’. The authors thank the organizers and the center for their kind hospitality. The research of the second author has been supported by the Scientific and Technological Research Council of Türkiye (TUBITAK) through the Incentive Program for International Scientific Publications (UBYT). The third author has been funded under the Grant QUALIFICA by Junta de Andaluc\'ia grant number QUAL21 005 USE.



\begin{thebibliography}{99}
\bibitem{ACM'21}
E. M. Ait Ben Hassi, S. E. Chorfi and L. Maniar, An inverse problem of radiative potentials and initial temperatures in parabolic equations with dynamic boundary conditions, \emph{J. Inverse Ill-Posed Probl.}, \textbf{30} (2022), 363--378.

\bibitem{ACM'22}
E. M. Ait Ben Hassi, S. E. Chorfi and L. Maniar,
Identification of source terms in heat equation with dynamic boundary conditions,
\emph{Math. Meth. Appl. Sci.}, \textbf{45} (2022), 2364--2379.

\bibitem{ACMO'21}
E. M. Ait Ben Hassi, S. E. Chorfi, L. Maniar and O. Oukdach,
Lipschitz stability for an inverse source problem in anisotropic parabolic equations with dynamic boundary conditions,
\emph{Evol. Equ. and Cont. Theo.}, \textbf{10} (2021), 837--859.

\bibitem{AM12}
S. A. Avdonin and V. S. Mikhaylov, Inverse source problem for the 1D Schrödinger equation, \emph{J. Math. Sci.}, \textbf{185} (2012), 513--516.

\bibitem{AMR14}
S. A. Avdonin, V. S. Mikhaylov and K. Ramdani, Reconstructing the potential for the one-dimensional Schrödinger equation from boundary measurements, \emph{IMA J. Math. Contr. Inf.}, \textbf{31} (2014), 137--150,

\bibitem{Ba93}
A. D. Bandrauk, \emph{Molecules in laser fields}, CRC Press, 1993.

\bibitem{BM08}
L. Baudouin and A. Mercado, An inverse problem for Schr\"odinger equations with discontinuous
main coefficient, \emph{Appl. Anal.}, \textbf{87} (2008), 1145--1165.

\bibitem{BP08}
L. Baudouin and J.-P. Puel, Corrigendum Uniqueness and stability in an inverse problem for the Schr\"odinger equation, \emph{Inverse Problems}, \textbf{23} (2008), 1327--1328.

\bibitem{BP02}
L. Baudouin and J.-P. Puel, Uniqueness and stability in an inverse problem for the Schr\"odinger equation, \emph{Inverse Problems}, \textbf{18} (2002), 1537–1554.

\bibitem{BS12}
F. A. Berezin and M. Shubin, \emph{The Schrödinger Equation}, Vol. \textbf{66}, Springer Science \& Business Media, 2012.

\bibitem{CMM23}
N. Carreño, A. Mercado and R. Morales, Local null controllability of a cubic Ginzburg-Landau equation with dynamic boundary conditions, \emph{arXiv:2301.03429} (2023).

\bibitem{CCLL16}
M. M. Cavalcanti, W. J. Corr\^ea, I. Lasiecka and C. Lefler, Well-posedness and uniform stability for nonlinear Schr\"odinger equations with dynamic/Wentzell boundary conditions, \emph{Indiana Univ. Math. J.}, \textbf{65} (2016), 1445--1502.

\bibitem{CL0623}
Q. Chen and J. Liu, Solving an inverse parabolic problem by optimization from final measurement data, \emph{J. Comput. Appl. Math.}, \textbf{193} (2006), 183--203.

\bibitem{CGMZ23}
S. E. Chorfi, G. El Guermai, L. Maniar and W. Zouhair, Numerical identification of initial temperatures in heat equation with dynamic boundary conditions, \emph{Mediterranean J. Math.}, \textbf{20} (2023), 256.

\bibitem{CGMZ232}
S. E. Chorfi, G. El Guermai, L. Maniar and W. Zouhair, Identification of source terms in wave equation with dynamic boundary conditions, \emph{Math. Meth. Appl. Sci.}, \textbf{46} (2023), 911--929.

\bibitem{Eg18}
H. Egger, K. Fellner, J. F. Pietschmann and B. Q. Tang, Analysis and numerical solution of coupled volume-surface reaction–diffusion systems with application to cell biology, \emph{Appl. Math. Comput.}, \textbf{336} (2018), 351--367.

\bibitem{EHN'00}
H. W. Engl, M. Hanke and A. Neubauer,
\emph{Regularization of Inverse Problems},
Kluwer Academic Publishers, 2000.

\bibitem{GM92}
A. Giusti-Suzor and F. H. Mies, Vibrational trapping and suppression of dissociation in intense laser fields, \emph{Physical review letters}, \textbf{68} (1992), 3869.

\bibitem{GM13}
A. Glitzky and A. Mielke, A gradient structure for systems coupling reaction-diffusion effects in bulk and interfaces, \emph{Z. angew. Math. Physik.}, \textbf{64} (2013), 29--52.

\bibitem{Go'06}
G. R. Goldstein, Derivation and physical interpretation of general boundary conditions, \emph{Adv. Diff. Equ.}, \textbf{11} (2006), 457--480.

\bibitem{Ha09}
A. Hasanov, Identification of an unknown source term in a vibrating cantilevered beam from final overdetermination, \emph{Inverse Problems}, \textbf{25} (2009), 115015.

\bibitem{Ha'07}
A. Hasanov,
Simultaneous determination of source terms in a linear parabolic problem from the final overdetermination: Weak solution approach,
\emph{J. Math. Anal. Appl.}, \textbf{330} (2007), 766--779.

\bibitem{Ha091}
A. Hasanov, Simultaneous determination of source terms in a linear hyperbolic problem from the final overdetermination: weak solution approach, \emph{IMA J. Appl. Math.}, \textbf{74} (2009), 1--19.

\bibitem{Ha19}
A. Hasanov, O. Baysal and C. Sebu, Identification of an unknown shear force in the Euler-Bernoulli cantilever beam from measured boundary deflection, \emph{Inverse Problems}, \textbf{35} (2019), 115008.

\bibitem{HR'21}
A. Hasanov Hasanoğlu and V. G. Romanov,
\emph{Introduction to Inverse Problems for Differential Equations},
Springer, second edition, 2021.

\bibitem{IY24}
O. Y. Imanuvilov and M. Yamamoto, Determination of a source term in Schr\"odinger equation with data taken at final moment of observation, \emph{Commun. Anal. Comp.}, (2024), doi: 10.3934/cac.2024016.

\bibitem{La32}
R. E. Langer, A problem in diffusion or in the flow of heat for a solid in contact with a fluid, \emph{Tohoku Math. J.}, \textbf{35} (1932), 260--275.

\bibitem{Iv63}
V. K. Ivanov, On ill-posed problems, \emph{Mat. USSR Sb.}, \textbf{61} (1963), 211--223.

\bibitem{LTZ04}
I. Lasiecka, R. Triggiani and X. Zhang, Global uniqueness, observability and stabilization
of nonconservative Schr\"odinger equations via pointwise Carleman estimates. I. $H^1(\Omega)$-estimates, \emph{J. Inverse Ill-Posed Probl.}, \textbf{12} (2004), 43--123.

\bibitem{JLLions}
J.-L. Lions, \emph{Contr\^olabilit\'e Exacte, Perturbations et Stabilisation de Syst\`emes Distribu\'es}, Tome
1, Rech. Math. Appl., \textbf{8}, Masson, Paris, 1988.

\bibitem{Mach94}
E. Machtyngier, Exact controllability for the Schrödinger equation, \emph{SIAM J. Control Optim.}, \textbf{32} (1994), 24--34.

\bibitem{MMS'17}
L. Maniar, M. Meyries and R. Schnaubelt,
Null controllability for parabolic equations with dynamic boundary conditions,
\emph{Evol. Equat. and Cont. Theo.}, \textbf{6} (2017), 381--407.

\bibitem{MM23}
A. Mercado and R. Morales, Exact controllability for a Schrödinger equation with dynamic boundary conditions, \emph{SIAM J. Control Optim.}, \textbf{61} (2023), 3501--3525.

\bibitem{MOR08}
A. Mercado, A. Osses and L. Rosier, Inverse problems for the Schr\"odinger equation via Carleman inequalities with degenerate weights, \emph{Inverse Problems}, \textbf{24} (2008), 015017.

\bibitem{MIR05}
A. Miranville and S. Zelik, Exponential attractors for the Cahn–Hilliard equation with dynamic boundary conditions, \emph{Math. Meth. Appl. Sci.}, \textbf{28} (2005), 709--735.

\bibitem{Nem86}
A. C. Nemirovskii, The regularizing properties of the adjoint gradient method in ill-posed problems. \emph{USSR Comput. Math. Math. Phys.}, \textbf{26} (1986), 7--16.

\bibitem{Sa20}
N. Sauer, Dynamic boundary conditions and the Carslaw-Jaeger constitutive relation in heat transfer, \emph{SN Partial Differ. Equ. Appl.}, \textbf{1} (2020), 48.

\bibitem{YFL19}
F. Yang, J. L. Fu, X. X. Li, A potential-free field inverse Schrödinger problem: optimal error bound analysis and regularization method, \emph{Inv. Probl.  Sci. Eng.}, \textbf{28} (2019), 1209--1252.

\bibitem{Zua03}
E. Zuazua, Remarks on the controllability of the Schr\"odinger equation, Quantum control: mathematical and numerical challenges, 193--211, CRM Proc. Lecture Notes, \textbf{33}, Amer. Math. Soc., Providence, RI (2003).

\end{thebibliography}
\end{document}